\DeclareMathOperator{\diag}{diag}
\DeclareMathOperator{\tril}{tril}
\DeclareMathOperator{\triu}{triu}
\DeclareMathOperator{\tridiag}{tridiag}
\journalname{}
\pgfplotsset{compat=1.16}
\newcommand{\B}[1]{\mbox{\boldmath $#1$}}
\DeclarePairedDelimiter{\norm}{\lVert}{\rVert}
\newcommand{\col}{\mathord{\,:\,}}
\title{Comparison Theorems for Splittings of M-matrices in (block) Hessenberg Form \thanks{The authors are partially supported by INDAM/GNCS and by the project PRA\_2020\_61 of the University of Pisa.}}
\author{Luca Gemignani \and Federico Poloni}
\institute{
  Gemignani, Luca \at 
  Dipartimento di Informatica\\
  Universit\`a di Pisa \\
  \email{luca.gemignani@unipi.it}
  \and
  Poloni, Federico \at
  Dipartimento di Informatica\\
  Universit\`a di Pisa \\
  \email{federico.poloni@unipi.it}
}
\begin{document}
\maketitle

\begin{abstract}
  Some variants of the (block) Gauss--Seidel iteration  for the solution of  linear systems with $M$-matrices in (block) Hessenberg  form   are discussed.
  Comparison results for the asymptotic convergence rate of some regular splittings are derived: in particular, we prove that for a lower-Hessenberg M-matrix $\rho(P_{GS})\geq \rho(P_S)\geq \rho(P_{AGS})$, where $P_{GS}, P_S, P_{AGS}$ are the iteration matrices of the Gauss--Seidel, staircase, and anti-Gauss--Seidel method. This is a result that does not seem to follow from classical comparison results, as these splittings are not directly comparable. It is shown that the  concept of stair partitioning provides  a
  powerful tool  for the design of new variants
  that are suited for parallel computation.  
	
\subclass{65F15}
\end{abstract}

\section{Introduction}
 
Solving block Hessenberg  systems is one of the key issues in numerical simulations
of many scientific and engineering problems.
Possibly singular M-matrix linear systems in  block Hessenberg form  are found in finite difference or finite
element methods for partial differential equations, Markov chains, production and growth models in economics, and linear complementarity problems in operational research~\cite{Nelson,Plemmons}. 
Finite difference or finite element discretizations of PDEs usually produce matrices which are banded or block banded (e.g., block tridiagonal or block pentadiagonal) \cite{Saad}.
Discrete-state   models encountered in   several applications such as  modeling
and analysis of communication  and computer networks can be conveniently represented by 
a discrete/continuous time Markov chain \cite{Nelson}.
In many cases   an appropriate numbering of the states yields a chain with  block  upper or lower Hessenberg structures (GI/M/1 and  M/G/1 queues) or, in the intersection, a
block tridiagonal  generator or transition probability matrix,  that is, a quasi-birth-and-death process (QBD).
QBDs  are   also well  suited  for  modeling  various  population  processes   such as cell growth, biochemical reaction kinetics, epidemics, demographic trends,
or queuing systems, amongst others \cite{LR_redbook}.

Active  computational research  in this area is
focused on the development of techniques, methods and
data structures, which minimize  the computational (space and
time) requirements for solving  large   and possibly sparse linear systems.  
One of such techniques is parallelization.
Divide-and-conquer solvers for  $M$-matrix  linear systems in  block  banded or  block Hessenberg form   are described in \cite{DC,GL,Stewart}.
A specialization of these algorithms based on cyclic reduction for
block Toeplitz Hessenberg matrices is  discussed in \cite{BMC}. 
However, due to the communication costs these  schemes   typically  scale well with processor count only for very large matrix block sizes.
Iterative methods  can  provide an  attractive alternative  primarily because they simplify  both  implementation and sparsity treatment. The crux resides in the analysis of their
convergence properties. 

Among classical iterative methods, the Gauss--Seidel method has several interesting  features.
It is a classical result that on a nonsingular M-matrix the Gauss--Seidel method converges faster than the Jacobi method~\cite[Corollary~5.22]{Plemmons}.
The SOR method with the optimal relaxation parameter can be better yet, but, however, choosing an optimal SOR relaxation parameter is  difficult for many problems.
Therefore, the Gauss--Seidel method is very attractive in practice and it is  also used  as preconditioner in combination with other iterative schemes.  A classical example is the
multigrid method for partial differential equations, where using Gauss--Seidel or SOR as a smoother typically yields good convergence properties \cite{DBLP}.
Parallel implementations of the Gauss--Seidel method have been designed for  certain regular problems,for example, the solution of Laplace's equations by finite differences,
by relying upon red-black coloring or more generally multi-coloring  schemes to provide some parallelism \cite{OV}.
In most cases, constructing efficient parallel true Gauss--Seidel algorithms is challenging  and Processor Block (or localized) Gauss--Seidel is often used \cite{SY}.
Here, each processor performs Gauss--Seidel as a subdomain solver for a block Jacobi method. While Processor Block Gauss--Seidel is easy to parallelize,
the overall convergence  can suffer.  In order to improve the parallelism of Gauss--Seidel-type methods while retaining the same convergence rate, in  \cite{S1}  staircase  splittings are introduced by showing that
for consistently ordered matrices \cite{Saad}  the iterative scheme based on such partitionings splits  into  independent computations and  at the same time exhibits the same convergence rate as the
classical Gauss--Seidel iteration.  An extension of this result  for  block tridiagonal matrices appeared in \cite{AM}. 
 The use of  a  Krylov solver  like BCG,  GMRES and BiCGSTAB,   for block tridiagonal systems  complemented with a  stair preconditioner  which
 accounts  for the structure of the coefficient  matrix  is proposed in \cite{STP}.
 
A classical framework to study the convergence speed of iterative methods for linear systems $A\B x=\B b$  is that of \emph{matrix splittings}: one writes $A = M-N$, with $M$ invertible, and considers the iterative method
\begin{equation}\label{iterativeformula}
  \B x^{(\ell+1)}= P\B x^{(\ell)} + M^{-1} \B b, \quad \ell\geq 0,
\end{equation}
where $P=M^{-1} N$ is the \emph{iteration matrix}. Various results exist to compare the spectral radii of the iteration matrices of two splittings $A = M_1 - N_1 = M_2 - N_2$ under certain elementwise inequalities, such as
\begin{equation} \label{compineqgallery}
    N_2 \geq N_1 \geq 0, \quad M_1^{-1} \geq M_2^{-1}, \quad \text{ or } \quad A^{-1}N_2A^{-1}\geq A^{-1}N_1A^{-1}:
\end{equation}
 see for instance~\cite{Plemmons,CsoV,Woz}. 

In this paper, we consider the solution of $M$-matrix linear systems  
 in (block) Hessenberg form, and we show new comparison results between
 matrix splittings that hold for this special structure.
In particular, for a lower  Hessenberg invertible $M$-matrix $A$  we prove the inequalities
\[
\rho(P_{GS})\geq \rho(P_S)\geq \rho(P_{AGS}),
\]
where $\rho(A)$ denotes the spectral radius of $A$ and $P_{GS}, P_S, P_{AGS}$  are  the iteration matrices  of the Gauss--Seidel method, the staircase splitting  method
 and the anti-Gauss--Seidel method,  respectively.  The first inequality  fosters the  use  of stair partitionings  for solving  Hessenberg  linear systems in parallel.  The second inequality says that the
 anti-Gauss--Seidel method  ---also called Reverse Gauss--Seidel in \cite{vargabook} and Backward Gauss--Seidel in \cite{Saad}--- gives the better choice in terms of convergence speedup.
 Comparison results for more general splittings including some generalizations of staircase partitionings  are also obtained.  

The remarkable feature of these results is that they do not seem to arise
from classical comparison results or from elementwise inequalities of the form~\eqref{compineqgallery} between the matrices that define the splittings:
$M_{GS}$ is lower triangular and $M_{AGS}$ is upper triangular, so they are far from being comparable.

Another reason why our results are counterintuitive is that the intuition behind inequalities of the form~\eqref{compineqgallery} and classical comparison theorems (such as Theorem~\ref{classicalcomparison} in the following) suggests that one should put ``as much of the matrix $A$ as possible'' into $M$ to get a smaller radius: so it is surprising that on a lower Hessenberg matrix AGS, in which $M$ only has $2n-1$ nonzeros, works better than GS, in which $M$ has $\frac{n(n+1)}{2}$ nonzeros, and that this property holds irrespective of the magnitude of these nonzero items.

We give an alternative combinatorial proof of the inequality $\rho(P_{GS}) \geq \rho(P_{AGS})$, which adds a new perspective and shows an elementwise inequality that can be used to derive these bounds. Extensions  to deal with possibly singular $M$-matrices and
 block Hessenberg structures are  discussed. Finally, some numerical experiments confirm the results and give a quantitative estimate of the difference between these spectral radii.

\section{Preliminaries}

Let $A \in \mathbb{R}^{n\times n}$ be an invertible M-matrix.

A \emph{regular splitting} of $A$ is any pair $(M,N)$, where $M$ is an invertible  M-matrix, $N\geq 0$, and $A = M-N$.
 In the terminology of Forsythe \cite{For} a 
\emph{linear stationary iterative method}  for solving
the system $A\B x=\B b$  can be described as~\eqref{iterativeformula}.  Let
\[
A=M-N=M'-N'
\]
be two  regular splittings of $A$. We can derive 
two iterative schemes with iteration matrices $P=M^{-1}N $ and $P'={M'}^{-1} N'$.
When $A$ is nonsingular,  it is well known that the scheme \eqref{iterativeformula}  is
convergent if and only if $\rho(P) < 1$, with $\rho(P)$ \emph{the spectral radius} of $P$.  Under convergence \emph{the asymptotic rate of convergence}
is  also given by $\rho(P)$, and, therefore,  it is interesting to compare $\rho(P)$ and $\rho(P')$.  A classical result is the following.

\begin{theorem}[\cite{vargabook}] \label{classicalcomparison}
Let $(M,N)$ and $(M', N')$ be regular splittings of $A$. If $N \leq N'$, then $\rho(M^{-1}N) \leq \rho((M')^{-1}N')$.
\end{theorem}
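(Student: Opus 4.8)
The plan is to express $\rho(M^{-1}N)$ as a fixed increasing function of $\rho(A^{-1}N)$, so that the desired comparison follows at once from the monotonicity of the spectral radius under the entrywise order. Observe first that, since $A$ is an invertible M-matrix, it is monotone ($A^{-1}\ge 0$); and for a regular splitting $A=M-N$ one has $M^{-1}\ge 0$ and $N\ge 0$, hence $P:=M^{-1}N\ge 0$, so by Perron--Frobenius $\rho(P)$ is itself an eigenvalue of $P$ with a nonnegative eigenvector.

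The core step is the identity
\[
\rho(M^{-1}N)=g\bigl(\rho(A^{-1}N)\bigr),\qquad g(t)=\frac{t}{1+t},
\]
where $g$ is strictly increasing on $[0,\infty)$, and I would establish it by two inequalities. Take $\B x\ge\B 0$, $\B x\neq\B 0$, with $M^{-1}N\B x=\rho(P)\B x$; writing $M=A+N$ this rearranges to $(1-\rho(P))\,N\B x=\rho(P)\,A\B x$. The value $\rho(P)=1$ is excluded, as it would force $A\B x=\B 0$ with $A$ invertible, so $A^{-1}N\B x=\tfrac{\rho(P)}{1-\rho(P)}\B x$; since $A^{-1}N\ge 0$ and $\B x$ is nonnegative and nonzero, the scalar $\tfrac{\rho(P)}{1-\rho(P)}$ is nonnegative, forcing $\rho(P)<1$, and being a nonnegative eigenvalue of $A^{-1}N$ it satisfies $\tfrac{\rho(P)}{1-\rho(P)}\le\rho(A^{-1}N)$, i.e.\ $\rho(P)\le g(\rho(A^{-1}N))$. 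Conversely, picking a Perron eigenvector $\B y\ge\B 0$ of $A^{-1}N$ with $A^{-1}N\B y=\rho(A^{-1}N)\B y$ and running the same manipulation backwards yields $M^{-1}N\B y=g(\rho(A^{-1}N))\,\B y$, so $g(\rho(A^{-1}N))$ is a nonnegative eigenvalue of $P$ and hence $g(\rho(A^{-1}N))\le\rho(P)$. The two bounds give the identity.

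With this in hand the conclusion is immediate: from $\B 0\le N\le N'$ and $A^{-1}\ge\B 0$ we get $\B 0\le A^{-1}N\le A^{-1}N'$ entrywise, monotonicity of the spectral radius on nonnegative matrices gives $\rho(A^{-1}N)\le\rho(A^{-1}N')$, and applying the increasing map $g$ yields $\rho(M^{-1}N)=g(\rho(A^{-1}N))\le g(\rho(A^{-1}N'))=\rho((M')^{-1}N')$.

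I expect the only genuine obstacle to be the middle step: one must resist comparing $M^{-1}N$ and $(M')^{-1}N'$ directly---these need not be entrywise ordered even when $N\le N'$---and instead route the comparison through $A^{-1}N$. The technical care there lies in pinning down the scalar relation $\rho(P)=g(\rho(A^{-1}N))$ by the two-sided argument above, since a one-sided identification from a single nonnegative eigenvector would be unjustified for reducible $P$; the remaining ingredients (invertible M-matrices are monotone, and $\rho(\cdot)$ is monotone under the entrywise order) are standard Perron--Frobenius facts.
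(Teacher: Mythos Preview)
Your argument is correct and is precisely the classical proof due to Varga that the paper is invoking: the paper does not supply its own proof of this theorem but merely cites \cite{vargabook}, and the route you take---expressing $\rho(M^{-1}N)$ as the increasing function $t\mapsto t/(1+t)$ applied to $\rho(A^{-1}N)$, then using $A^{-1}\ge 0$ and Perron--Frobenius monotonicity---is exactly Varga's. Your two-sided verification of the identity $\rho(M^{-1}N)=g(\rho(A^{-1}N))$ is the right way to handle the reducible case, and the remaining steps are sound.
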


Another tool  to obtain comparison results for matrix splittings is the exploitation of certain block partitionings of the matrix $A$. 

\begin{lemma} \label{lem:exchange}
Let
\[
M = \begin{bmatrix}
    M_{11} & 0\\
    A_{21} & M_{22}
\end{bmatrix}, \quad
N = \begin{bmatrix}
    N_{11} & -A_{12}\\
    0 & N_{22}
\end{bmatrix}.
\]
and
\[
\hat{M} = \begin{bmatrix}
    M_{11} & A_{12}\\
    0 & M_{22}
\end{bmatrix}, \quad
\hat{N} = \begin{bmatrix}
    N_{11} & 0\\
    -A_{21} & N_{22}
\end{bmatrix}.
\]
 be two regular splittings of $A=\begin{bmatrix}
    A_{11} & A_{12}\\
    A_{2,1} & A_{22}
\end{bmatrix}$, with $A_{11}, M_{11},  N_{11}\in \mathbb R^{k\times k}$  and  $A_{22}, M_{22},  N_{22}\in \mathbb R^{(n-k)\times (n-k)}$.

Then, $\hat{M}^{-1}\hat{N}$ and $M^{-1}N$ have the same eigenvalues (and, hence, the same spectral radius).
\end{lemma}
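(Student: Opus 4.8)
The plan is to exhibit an explicit similarity transformation that swaps the two off-diagonal blocks while keeping the diagonal blocks fixed. Write $A = M - N = \hat M - \hat N$ and observe that the only difference between the two splittings is whether the block $A_{12}$ lives in $M$ (and thus $-A_{12}$ effectively moved to $N$ as $\hat N$ has a zero there) or $A_{21}$: in $(M,N)$ the lower block $A_{21}$ sits in $M$ and the upper block $A_{12}$ sits in $N$ (as $-A_{12}$), whereas in $(\hat M,\hat N)$ it is the reverse. The natural candidate for the conjugating matrix is a block-diagonal sign change, namely $D = \diag(I_k, -I_{n-k})$, or more precisely I would look for an invertible $S$ such that $S^{-1} M^{-1} N S = \hat M^{-1}\hat N$, equivalently $N S = M S \hat M^{-1} \hat N$; the cleanest route is to find $S$ with $\hat M = S^{-1} M S$ up to a correction that also matches $\hat N = S^{-1} N S$. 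Since conjugation by $D = \diag(I_k,-I_{n-k})$ flips the sign of both off-diagonal blocks of any $2\times 2$ block matrix, it does not by itself do the job; instead I expect the relevant transformation to be the one coming from the identity $M^{-1}N = I - M^{-1}A$ and $\hat M^{-1}\hat N = I - \hat M^{-1} A$, reducing the claim to showing that $M^{-1}A$ and $\hat M^{-1} A$ are similar.

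So the core reduction is: $\rho(M^{-1}N)=\rho(\hat M^{-1}\hat N)$ follows if $M^{-1}A$ and $\hat M^{-1}A$ have the same eigenvalues, since each iteration matrix is $I$ minus the respective product. Now $M^{-1}A$ and $\hat M^{-1}A$ have the same eigenvalues iff $A M^{-1}$ and $A \hat M^{-1}$ do (moving $A$ around a product preserves nonzero eigenvalues, and here $A$ is invertible so all eigenvalues are handled), and more symmetrically iff $M^{-1}$ and $\hat M^{-1}$ are related by a similarity that also fixes $A$ appropriately. The concrete computation I would carry out: first compute $M^{-1}$ in block form using the block lower-triangular structure,
\[
M^{-1} = \begin{bmatrix} M_{11}^{-1} & 0 \\ -M_{22}^{-1}A_{21}M_{11}^{-1} & M_{22}^{-1}\end{bmatrix},
\]
and similarly $\hat M^{-1} = \begin{bmatrix} M_{11}^{-1} & -M_{11}^{-1}A_{12}M_{22}^{-1} \\ 0 & M_{22}^{-1}\end{bmatrix}$; then form $M^{-1}A$ and $\hat M^{-1}A$ explicitly (four block products each), and finally verify that conjugation by a suitable block-diagonal or block-triangular matrix built from $M_{11}, M_{22}$ carries one to the other. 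A promising guess for the conjugator is $S = \diag(M_{11}, M_{22})$ or its inverse, since $S M^{-1}$ and $M^{-1} S$ both simplify the triangular factors nicely; I would check whether $S^{-1}(M^{-1}A)S = \hat M^{-1}A$ holds with this $S$, and if not, adjust by incorporating the relation $A_{11} = M_{11}-N_{11}$, $A_{22} = M_{22}-N_{22}$.

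The main obstacle I anticipate is pinning down the exact form of the conjugating matrix $S$: the two iteration matrices are genuinely different matrices (not even the same size of support pattern), so the similarity is not the obvious sign-flip, and finding it requires carefully matching all four blocks of $M^{-1}N$ against those of $\hat M^{-1}\hat N$ after conjugation. An alternative, possibly cleaner, tactic that sidesteps guessing $S$ is a characteristic-polynomial argument: show $\det(\lambda I - M^{-1}N) = \det(\lambda I - \hat M^{-1}\hat N)$ directly by writing each as $\det(M)^{-1}\det(\lambda M - N)$ and $\det(\hat M)^{-1}\det(\lambda \hat M - \hat N)$, noting $\det M = \det M_{11}\det M_{22} = \det \hat M$, and then computing $\det(\lambda M - N)$ versus $\det(\lambda \hat M - \hat N)$ via a Schur complement with respect to the $(1,1)$ block. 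Both determinants should reduce to an expression symmetric in the roles of $A_{12}$ and $A_{21}$ — concretely something like $\det\bigl((\lambda M_{11}-N_{11}) - (\text{stuff})(\lambda M_{22}-N_{22})^{-1}(\text{stuff})\bigr)$ where the product of the two off-diagonal contributions is $A_{12}A_{21}$ regardless of which splitting we started from — and this symmetry is exactly what forces the two characteristic polynomials to coincide. I would present whichever of these two arguments turns out shorter, but I expect the determinant/Schur-complement route to be the most robust.
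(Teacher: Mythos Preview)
Your second approach --- comparing the characteristic polynomials $\det(\lambda M-N)$ and $\det(\lambda\hat M-\hat N)$, after noting $\det M=\det M_{11}\det M_{22}=\det\hat M$ --- is exactly the strategy the paper uses, and your Schur-complement computation does go through (the off-diagonal contributions enter only as the product $xA_{12}(\lambda M_{22}-N_{22})^{-1}A_{21}$, which is the same whichever factor carries the scalar $\lambda$). The paper's execution is a bit slicker: rather than taking a Schur complement, it simply observes that for $x\neq 0$ the two pencil matrices are related by
\[
\begin{bmatrix} xM_{11}-N_{11} & A_{12}\\ xA_{21} & xM_{22}-N_{22}\end{bmatrix}
=\diag(I_k,\,xI_{n-k})
\begin{bmatrix} xM_{11}-N_{11} & xA_{12}\\ A_{21} & xM_{22}-N_{22}\end{bmatrix}
\diag(I_k,\,x^{-1}I_{n-k}),
\]
so their determinants agree for $x\neq 0$ and hence, by continuity, everywhere. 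This avoids worrying about invertibility of the pivot block.

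One caution about your first line of attack: searching for a fixed invertible $S$ with $S^{-1}(M^{-1}N)S=\hat M^{-1}\hat N$ asks for strictly more than the lemma asserts. The statement only claims equal eigenvalues (equivalently, equal characteristic polynomials), not similarity; two matrices with the same characteristic polynomial can have different Jordan structure, so there is no reason to expect such an $S$ to exist in general. It is safer to go straight to the determinant argument, as you eventually do.
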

\begin{proof}
We shall prove that the polynomials
\[
p(x) = \det(M)\det(xI - M^{-1}N) = \det(xM-N) = \det \underbrace{
\begin{bmatrix}
    xM_{11}-N_{11} & A_{12}\\
    xA_{21} & xM_{22} -N_{22}
\end{bmatrix}}_{:=P(x)}
\]
and
\[
q(x) = \det(\hat{M})\det(xI - \hat{M}^{-1}\hat{N}) = \det(x\hat{M}-\hat{N}) =\det
\underbrace{\begin{bmatrix}
    xM_{11}-N_{11} & xA_{12}\\
    A_{21} & xM_{22} -N_{22}
\end{bmatrix}}_{:=Q(x)}
\]
coincide, hence they have the same zeros.
For any $x\neq 0$ we have
\[
P(x)=\diag(I_k, xI_{n-k}) \cdot Q(x) \cdot \diag(I_k, x^{-1} I_{n-k}).
\]
The proof follows from the continuity  of the determinant w.r.t. the matrix entries. 
%By Laplace's formula, 
%\[
%p(x) = \sum_{\sigma \in \mathfrak{S}_n} \prod_{i=1}^n P_{i \sigma_i}(x), \quad q(x) = \sum_{\sigma \in \mathfrak{S}_n} \prod_{i=1}^n Q_{i \sigma_i}(x).
%\]
%For any fixed permutation $\sigma$, the two terms $\prod_{i=1}^n P_{i \sigma_i}(x)$ and $Q_{i \sigma_i}(x)$ coincide apart from a power of $x$; more precisely, one contains a factor $x^{|\mathcal{P}|}$, where $\mathcal{P} = \%{\text{$(i, \sigma_i)$ is in the block $(2,1)$}\}$, while the other contains a factor $x^{|\mathcal{Q}|}$, where $\mathcal{Q} = \{\text{$(i, \sigma_i)$ is in the block $(1,2)$}\}$. But $|\mathcal{P}| = |\mathcal{Q}|$, thanks %to a combinatorial argument: if there are $h$ elements in $1:k$ whose image is in $k+1:n$, then there must also be $h$ elements in $k+1:n$ whose image is in $1:k$.
\end{proof}

\begin{corollary} \label{cor:exchange}
Let
\[
M' = \begin{bmatrix}
    M_{11} & 0\\
    M_{21} & M_{22}
\end{bmatrix}, \quad
N' = \begin{bmatrix}
    N_{11} & -A_{12}\\
    N_{21} & N_{22}
\end{bmatrix}.
\]
and
\begin{equation} \label{hatsplitting}
    \hat{M} = \begin{bmatrix}
        M_{11} & A_{12}\\
        0 & M_{22}
    \end{bmatrix}, \quad
    \hat{N} = \begin{bmatrix}
        N_{11} & 0\\
        -A_{21} & N_{22}
    \end{bmatrix}.
\end{equation}
(where the blocks on the diagonal are square) be two regular splittings of $A$.
Then, $\rho(\hat{M}^{-1}\hat{N}) \leq \rho((M')^{-1}N')$.
\end{corollary}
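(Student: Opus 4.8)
The plan is to reduce the statement to a combination of Theorem~\ref{classicalcomparison} and Lemma~\ref{lem:exchange}. The obstacle is that $M'$ and $\hat M$ differ in two ways at once: the off-diagonal block has been moved from the $(2,1)$ position to the $(1,2)$ position (this is exactly the situation of Lemma~\ref{lem:exchange}), but in addition $M'$ carries a full block $M_{21}$ below the diagonal whereas the corresponding $\hat M$-type splitting only carries the forced entry $A_{21}$. So I would first interpose the intermediate splitting
\[
M = \begin{bmatrix} M_{11} & 0\\ A_{21} & M_{22}\end{bmatrix}, \qquad
N = \begin{bmatrix} N_{11} & -A_{12}\\ 0 & N_{22}\end{bmatrix},
\]
which is precisely the splitting $(M,N)$ appearing in Lemma~\ref{lem:exchange}, paired with the given $\hat M,\hat N$ of \eqref{hatsplitting}.

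First I would check that $(M,N)$ is a regular splitting: $A=M-N$ holds by inspection (the $(2,1)$ block gives $A_{21}-0=A_{21}$, etc.), $N\ge 0$ because $N_{11},N_{22}\ge 0$ and $-A_{12}\ge 0$ (off-diagonal blocks of the $M$-matrix $A$ are nonpositive), and $M$ is an invertible $M$-matrix since it is block lower triangular with diagonal blocks $M_{11},M_{22}$ that are invertible $M$-matrices (these are principal submatrices of the regular-splitting matrix $\hat M$, hence invertible $M$-matrices) and nonpositive off-diagonal block $A_{21}$; being a $Z$-matrix whose leading principal submatrices have the same property and with a nonnegative inverse, it is an invertible $M$-matrix. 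Then Lemma~\ref{lem:exchange} applies to the pair $(M,N)$ and $(\hat M,\hat N)$ and gives $\rho(\hat M^{-1}\hat N)=\rho(M^{-1}N)$.

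It remains to compare $(M,N)$ with $(M',N')$. Here both splittings are of the ``same shape'' — block lower triangular $M$ and correspondingly-shaped $N$ with the same $(1,2)$ block $-A_{12}$ — and they differ only in the $(2,1)$ block: $N$ has a zero there while $N'$ has $N_{21}$. Since $(M',N')$ is a regular splitting we have $N_{21}\ge 0$, and the remaining blocks of $N$ and $N'$ satisfy $N_{11}=N_{11}$, $N_{22}=N_{22}$, $-A_{12}=-A_{12}$ by construction; hence $0\le N\le N'$ entrywise. Both are regular splittings of $A$, so Theorem~\ref{classicalcomparison} yields $\rho(M^{-1}N)\le \rho((M')^{-1}N')$. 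Chaining the two relations gives $\rho(\hat M^{-1}\hat N)=\rho(M^{-1}N)\le\rho((M')^{-1}N')$, which is the claim.

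The one point that needs care — and which I expect to be the main technical nuisance rather than a deep difficulty — is the verification that the interpolating matrix $M$ is genuinely an invertible $M$-matrix, so that $(M,N)$ qualifies as a regular splitting and both cited results are in force; this hinges on the fact that $M_{11}$ and $M_{22}$, being the diagonal blocks of the regular-splitting matrix $\hat M$, are themselves invertible $M$-matrices, and on the standard characterization of $M$-matrices via block triangular structure with nonpositive off-diagonal blocks.
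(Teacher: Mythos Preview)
Your proof is correct and follows exactly the paper's route: interpose the splitting $(M,N)$ of Lemma~\ref{lem:exchange}, use that lemma for the equality $\rho(\hat M^{-1}\hat N)=\rho(M^{-1}N)$, and then apply Theorem~\ref{classicalcomparison} via $N\le N'$. The only difference is that you spell out the verification that $(M,N)$ is itself a regular splitting, which the paper leaves implicit.
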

\begin{proof}
By Lemma~\ref{lem:exchange} and Theorem~\ref{classicalcomparison}, $ \rho(\hat{M}^{-1}\hat{N}) = \rho(M^{-1}N) \leq \rho((M')^{-1}N')$.
\end{proof}

\section{Comparing the GS, AGS, and staircase splitting on Hessenberg matrices}\label{sect_compare}
Corollary~\ref{cor:exchange} shows that a regular splitting with $M(1\col k,k+1\col n)=0$ can be converted into one with $M(k+1\col n,1\col k)=0$ and $N(1\col k,k+1\col n)=0$  by decreasing
its spectral radius, that is,
equivalently by improving its  asymptotic rate of convergence. On a lower Hessenberg matrix, we can apply the lemma repeatedly for different values of $k$,
since each superdiagonal block $M(1\col k,k+1\col n)$ contains only one nonzero that does not overlap with blocks with a different $k$. In this way we obtain comparison results for different regular splittings. 

To be  more specific, let
$A\in \mathbb R^{n\times n}$ be a lower Hessenberg invertible M-matrix. Then
\begin{align*}
M_{J}&=\diag(A), &  N_{J}&=M_{J}-A, & P_J &= M_J^{-1}N_J\\ M_{GS}&=\tril(A), &  N_{GS}&=M_{GS}-A, & P_{GS} &= M_{GS}^{-1}N_{GS}
\end{align*}
are the customary Jacobi and Gauss--Seidel regular splittings. An easy modification of the Gauss--Seidel partitioning is the so called anti-Gauss--Seidel regular  splitting
defined by
\[
M_{AGS}=\triu(A), \quad  N_{AGS}=M_{AGS}-A, \quad P_{AGS} = M_{AGS}^{-1}N_{AGS}.
\]
Alternative regular splittings are analyzed in  the works  \cite{Meu,S1} which introduce the concept of stair partitioning of a matrix  aimed at  the design of
fast parallel (preconditioned)  iterative solvers.
Let $\tridiag(A)=\tridiag(A_{i,i-1},A_{i,i}, A_{i,i+1})$  be  the tridiagonal matrix formed from the subdiagonal, diagonal and superdiagonal entries of $A$.
The {\em stair matrix}  of first order generated by  $A$ is  the  $n\times n$  matrix   $S_1=\mathcal S_1(A)$  filled with the entries  of $A$ according to the following rule:
\begin{lstlisting}[mathescape]
$S_1=\tridiag(A)$;   for   $i=1:2:n$;  $S_1(i,i-1)=0$; $S_1(i,i+1)=0$.
\end{lstlisting}
Analogously, the stair matrix of second  order generated by  $A$ is  the  $n\times n$  matrix   $S_2=\mathcal S_2(A)$ defined by
\begin{lstlisting}[mathescape]
$S_1=\tridiag(A)$;   for   $i=2:2:n$;  $S_1(i,i-1)=0$; $S_1(i,i+1)=0$.
\end{lstlisting}
If  $S=\mathcal S(A)$ is the stair matrix of the first or second order constructed from $A$ then
\[
M_{S}=S, \quad N_{S}=M_{S}-A, \quad P_S = M_S^{-1}N_S
\]
gives a  staircase  regular splitting of $A$. The next result compares the asymptotic convergence rates of these splittings.  Recall that
the  classical inequality
\[
\rho(P_{J})\geq \rho(P_{GS})
\]
easily follows from Theorem \ref{classicalcomparison}.

\begin{theorem}\label{main}
  Let $A$ be a lower Hessenberg invertible M-matrix. Then,
  \[
  \rho(P_{GS}) \geq \rho(P_{S}) \geq \rho(P_{AGS}).
  \]
\end{theorem}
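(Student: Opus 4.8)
The plan is to exhibit all three iteration matrices as endpoints of a chain of regular splittings linked by Corollary~\ref{cor:exchange}, along which the spectral radius only decreases. To organize the chain I would introduce, for every partition $\mathcal B=(B_1,\dots,B_p)$ of $\{1,\dots,n\}$ into consecutive index intervals, the block-bidiagonal matrix $M^{\mathcal B}$ whose $(r,r)$ block is $\tril(A(B_r,B_r))$, whose $(r,r+1)$ block is $A(B_r,B_{r+1})$, and whose remaining blocks vanish; put $N^{\mathcal B}=M^{\mathcal B}-A$ and $P^{\mathcal B}=(M^{\mathcal B})^{-1}N^{\mathcal B}$. Since $A$ is lower Hessenberg, every superdiagonal block $A(B_r,B_{r+1})$ carries the single (nonpositive) entry $A(\max B_r,\min B_{r+1})$, whereas the part of $A$ strictly below the block diagonal is arbitrary. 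One checks at once that $(M^{\mathcal B},N^{\mathcal B})$ is a regular splitting: $N^{\mathcal B}$ is made of negated off-diagonal entries of $A$, hence $N^{\mathcal B}\geq 0$, and $M^{\mathcal B}=A+N^{\mathcal B}$ is a $Z$-matrix dominating $A$ entrywise, hence an invertible $M$-matrix. The first (purely combinatorial) step is to recognize the three splittings of interest inside this family: $M_{GS}=M^{\mathcal B}$ for the trivial partition $\mathcal B=(\{1,\dots,n\})$; $M_{AGS}=M^{\mathcal B}$ for the finest partition $\mathcal B=(\{1\},\dots,\{n\})$ (here $\triu(A)$ is upper bidiagonal); and $M_S=M^{\mathcal B}$ with $\mathcal B=(\{1,2\},\{3,4\},\dots)$ when $S=\mathcal S_1(A)$, respectively $\mathcal B=(\{1\},\{2,3\},\{4,5\},\dots)$ when $S=\mathcal S_2(A)$.

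The heart of the proof is a single \emph{refinement lemma}: if $\mathcal B'$ is obtained from $\mathcal B$ by cutting one interval $[a\col b]$ into $[a\col c]$ and $[c+1\col b]$, then $\rho(P^{\mathcal B'})\leq\rho(P^{\mathcal B})$. I would prove it by applying Corollary~\ref{cor:exchange} with $k=c$. The key observation is that $M^{\mathcal B}$ vanishes on rows $1\col c$ and columns $c+1\col n$: inside $[a\col b]$ the rows indexed by $a\col c$ meet only the strictly lower triangular part, which is zero in those columns, while the superdiagonal block leaving $[a\col b]$ sits in rows $>c$; consequently $N^{\mathcal B}(1\col c,c+1\col n)=-A(1\col c,c+1\col n)$, which by the Hessenberg shape has the unique nonzero $-A(c,c+1)$ --- exactly the hypothesis of the corollary. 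A routine inspection of the blocks then shows that the matrix $\hat M$ delivered by Corollary~\ref{cor:exchange} --- it zeroes $M^{\mathcal B}(c+1\col n,1\col c)$, places $A(1\col c,c+1\col n)$ above the diagonal, and leaves the diagonal blocks $M^{\mathcal B}(1\col c,1\col c)$ and $M^{\mathcal B}(c+1\col n,c+1\col n)$ untouched --- is precisely $M^{\mathcal B'}$, with $\hat N=M^{\mathcal B'}-A=N^{\mathcal B'}$. The corollary then yields $\rho(P^{\mathcal B'})=\rho(\hat M^{-1}\hat N)\leq\rho(P^{\mathcal B})$.

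Finally I would chain these refinements together. The finest partition refines $\mathcal B_{S_1}=(\{1,2\},\{3,4\},\dots)$ by splitting each pair $\{2r-1,2r\}$ at $c=2r-1$, and $\mathcal B_{S_1}$ refines the trivial partition $\mathcal B_0$ by cutting the single interval successively at $c=2,4,6,\dots$; since each cut is a single-interval split, iterating the refinement lemma gives
\[
\rho(P_{GS})=\rho(P^{\mathcal B_0})\;\geq\;\rho(P^{\mathcal B_{S_1}})=\rho(P_{S_1})\;\geq\;\rho(P^{\mathcal B_\bullet})=\rho(P_{AGS}),
\]
where $\mathcal B_\bullet$ is the finest partition, and the argument for $\mathcal S_2(A)$ is identical with the roles of odd and even cut points exchanged. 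I expect the only real work to be combinatorial: carrying out the index arithmetic of the refinement lemma so that the exchange of Corollary~\ref{cor:exchange} is seen to send $M^{\mathcal B}$ \emph{exactly} onto $M^{\mathcal B'}$, and matching the procedural definitions of $\mathcal S_1(A)$, $\mathcal S_2(A)$, $\tril(A)$ and $\triu(A)$ (for lower Hessenberg $A$) to the block-bidiagonal template. All the analytic content is already packaged in Corollary~\ref{cor:exchange}; the rest is structural bookkeeping.
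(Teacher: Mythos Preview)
Your proposal is correct and follows essentially the same route as the paper: both arguments establish the chain of inequalities by repeatedly applying Corollary~\ref{cor:exchange} at successive cut points, exploiting the lower-Hessenberg shape so that each intermediate $M$ has the required vanishing superdiagonal block. Your packaging via interval partitions $\mathcal B$ and a single refinement lemma is a tidy abstraction of what the paper carries out concretely (illustrated for $n=7$, applying the corollary first for all odd $k$ and then for all even $k$), but the underlying mechanism is identical.
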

\begin{proof}
  The proof consists in applying Corollary~\ref{cor:exchange} repeatedly for all odd values of $k$, and then for all even values of $k$.
  We depict the transformations in the case $n=7$.
  We show here the nonzero pattern of $M$, displaying in position $i,j$ a symbol $\times$ to
  denote a nonzero entry $M_{ij}=A_{ij}$, and an empty cell to denote a zero entry $M_{ij}=0$.
  The symbol $\mapsto$ is used to denote a transformation of $M$ that reduces the spectral radius by Corollary~\ref{cor:exchange}.
\begin{align*}
M_{GS} &= 
\left[
\begin{array}{c|cccccc}
\times \\ \hline
\times & \times \\
\times & \times & \times \\
\times & \times & \times & \times \\
\times & \times & \times & \times & \times \\
\times & \times & \times & \times & \times & \times \\
\times & \times & \times & \times & \times & \times & \times \\
\end{array}
\right]
\mapsto
\left[
\begin{array}{ccc|cccc}
\times & \times \\
 & \times \\
 & \times & \times \\ \hline
 & \times & \times & \times \\
 & \times & \times & \times & \times \\
 & \times & \times & \times & \times & \times \\
 & \times & \times & \times & \times & \times & \times \\
\end{array}
\right]
\\
& \mapsto
\left[
\begin{array}{ccccc|cc}
\times & \times \\
 & \times \\
 & \times & \times & \times \\ 
 &  &  & \times \\
 &  &  & \times & \times \\ \hline
 &  &  & \times & \times & \times \\
 &  &  & \times & \times & \times & \times \\
\end{array}
\right]
\mapsto
\left[
\begin{array}{ccccccc}
\times & \times \\
 & \times \\
 & \times & \times & \times \\ 
 &  &  & \times \\
 &  &  & \times & \times & \times \\
 &  &  &  &  & \times \\
 &  &  &  &  & \times & \times \\
\end{array}
\right] = M_{S}.
\end{align*}
This sequence of transformations shows that $\rho(P_{GS}) \geq \rho(P_{S})$. We then continue with even values of $k$ to obtain the anti--Gauss--Seidel splitting.
\begin{align*}
M_{S} &=
\left[
\begin{array}{cc|ccccc}
\times & \times \\
 & \times \\ \hline
 & \times & \times & \times \\ 
 &  &  & \times \\
 &  &  & \times & \times & \times \\
 &  &  &  &  & \times \\
 &  &  &  &  & \times & \times \\
\end{array}
\right]
\mapsto
\left[
\begin{array}{cccc|ccc}
\times & \times \\
 & \times & \times \\ 
 &  & \times & \times \\ 
 &  &  & \times \\ \hline
 &  &  & \times & \times & \times \\
 &  &  &  &  & \times \\
 &  &  &  &  & \times & \times \\
\end{array}
\right]
\\ & \mapsto
\left[
\begin{array}{cccccc|c}
\times & \times \\
 & \times & \times \\ 
 &  & \times & \times \\ 
 &  &  & \times &\times \\ 
 &  &  &  & \times & \times \\
 &  &  &  &  & \times \\ \hline
 &  &  &  &  & \times & \times \\
\end{array}
\right]
\mapsto
\left[
\begin{array}{ccccccc}
\times & \times \\
 & \times & \times \\ 
 &  & \times & \times \\ 
 &  &  & \times &\times \\ 
 &  &  &  & \times & \times \\
 &  &  &  &  & \times & \times \\ 
 &  &  &  &  & & \times \\
\end{array}
\right] = M_{AGS}.
\end{align*}
\end{proof}

\begin{remark}
  Theorem 4 extends straightforwardly to nonsingular M-matrices in
block Hessenberg form by considering the corresponding block regular splittings.
  \end{remark}

\section{Comparing substitution splittings for Hessenberg matrices} \label{sec:substitution}

\emph{Substitution splittings} provide a generalization of the partitionings introduced in the previous  section. 
A permutation $v = (i_1,i_2,\dots,i_n)$  of $(1,2,\dots,n)$ is said a \emph{substitution order} for $M\in \mathbb{R}^{n\times n}$ if $M_{ij}=0$ whenever $j$ comes after $i$ in the list $v$.
This property means that we can solve a system $Mx=b$ by substitution, computing unknowns $x_i$ in the order of the list $v$, as in the following pseudocode:
\begin{lstlisting}[mathescape]
for $i = i_1, i_2, \dots, i_n$  
  solve for $x_i$ in the $i$th row of $M\B x=\B b$;
end
\end{lstlisting}
We call \emph{substitution matrix} any matrix $M$ that admits a substitution order. This includes lower triangular matrices (with order $[1,2,\dots,n]$),  upper triangular matrices (with order $[n,n-1,\dots,1]$) and
staircase partitionings (with order $[1,3,\dots, 2,4, \ldots]$ or  $[2,4,\dots, 1,3, \ldots]$).  Substitution splittings  also comprise the generalized staircase partitionings  described in \cite{S1}.
More generally, it is not hard to see that $M$ is a substitution matrix if and only if $\Pi M \Pi^T$ is lower triangular, where $\Pi$ is the permutation matrix associated to $v$.

The following theorem generalizes the previous result on comparing splittings for Hessenberg matrices.
\begin{theorem}\label{maingen}
  Let $A$ be a lower Hessenberg invertible M-matrix, and let $(M,N)$ be a regular splitting with a substitution matrix $M$. Then,
  \[
  \rho(M^{-1}N) \geq \rho(M_{AGS}^{-1}N_{AGS}).
  \]
\end{theorem}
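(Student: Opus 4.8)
The plan is to reduce the general substitution-matrix case to the anti--Gauss--Seidel splitting by the same ``block exchange'' mechanism used in Theorem~\ref{main}, applied now along an arbitrary substitution order instead of the staircase order. Fix a substitution order $v=(i_1,\dots,i_n)$ for $M$, so that $\Pi M \Pi^T$ is lower triangular, where $\Pi$ is the permutation matrix of $v$. The key structural observation is that on a lower Hessenberg matrix $A$, the only entries of $A$ that sit strictly above the diagonal are the single superdiagonal entries $A_{k,k+1}$ for $k=1,\dots,n-1$; correspondingly $N = M - A$ has exactly those $n-1$ entries $-A_{k,k+1}$ as its only possible ``above-diagonal obstruction'' beyond what $M$ already captures. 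So the situation is genuinely one-dimensional: for each $k$, either $M$ contains the entry in position $(k,k+1)$ (i.e.\ $i_{k+1}$ comes before $i_k$ in $v$, reading positions appropriately) or $N$ does. I would process the indices $k=1,\dots,n-1$ and, whenever $M$ has a nonzero in position $(k,k+1)$, apply Corollary~\ref{cor:exchange} with that value of $k$ to move the $A_{k,k+1}$ entry from the $M(1\col k,\,k+1\col n)$ block into the $N(k+1\col n,\,1\col k)$ block, decreasing the spectral radius. Since the superdiagonal entry for a given $k$ does not interact with the split points for other values of $k$ (each superdiagonal block $M(1\col k, k+1\col n)$ contains only the single entry in row $k$, column $k+1$), these exchanges can be performed independently and the hypotheses of Corollary~\ref{cor:exchange}—that both sides are regular splittings of $A$—remain valid throughout because we never touch the diagonal or subdiagonal, and $A$ being an M-matrix keeps all intermediate $M$'s invertible M-matrices with $N\ge 0$.

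The second half is to check that the endpoint of this process is exactly $M_{AGS} = \triu(A)$. After all exchanges, the resulting $M$ has no entry in any position $(k,k+1)$ strictly above the diagonal—but wait, that is the opposite of upper triangular; here I must be careful about which of the two orientations Corollary~\ref{cor:exchange} drives us toward. Re-examining: Corollary~\ref{cor:exchange} takes a splitting with $M(1\col k, k+1\col n)=0$ (block lower triangular at the cut) and produces $\hat M$ with $\hat M(1\col k, k+1\col n)=A_{12}$ (block upper triangular at the cut) while lowering $\rho$. So the direction is toward making $M$ block \emph{upper} triangular at every cut. For a substitution matrix, at cut $k$ the matrix $M$ is block triangular in \emph{some} orientation—either the $(1\col k, k+1\col n)$ block or the $(k+1\col n, 1\col k)$ block is zero, depending on whether $i_k$ precedes or follows $i_{k+1}$ in $v$—but not necessarily the lower one; yet since on a Hessenberg $A$ the only nonzero above-diagonal entry straddling cut $k$ is $A_{k,k+1}$, the block $M(1\col k, k+1\col n)$ is either $0$ or equals the single-entry matrix $A_{12}$ regardless, and symmetrically for the other block via $N$. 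Applying the corollary at every cut $k$ where $M(1\col k, k+1\col n)=0$ (i.e.\ where the splitting is currently ``lower'' at that cut) converts it to ``upper'' at that cut; doing this for all $k$ leaves $M$ upper triangular, hence $M = \triu(A) = M_{AGS}$, and correspondingly $N = N_{AGS}$. Chaining the inequalities from Corollary~\ref{cor:exchange} gives $\rho(M^{-1}N) \geq \rho(M_{AGS}^{-1}N_{AGS})$.

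The main obstacle is bookkeeping: making rigorous the claim that the exchanges at different cuts $k$ are non-interfering and can be composed, and that after each exchange the matrix is still a substitution matrix (or at least still admits the block-triangular structure needed at the next cut we process). I would handle this by choosing a good processing order—say, processing cuts in decreasing order of $k$, or equivalently iterating Lemma~\ref{lem:exchange} from the last row upward—and arguing by induction that after handling cuts $k, k+1, \dots, n-1$, the trailing principal submatrix in rows/columns $k\col n$ has already been brought to upper triangular form while the leading part in rows/columns $1\col k-1$ is untouched, so the next exchange at cut $k-1$ sees precisely the block configuration Corollary~\ref{cor:exchange} requires. A secondary subtlety is verifying that at a cut $k$ where $M$ is already ``upper'' (block $(k+1\col n, 1\col k)$ zero), no action is needed and the structure is consistent with what the downward induction expects. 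Neither of these is deep, but writing them cleanly is the bulk of the work; the spectral-radius inequality itself is an immediate telescoping of Corollary~\ref{cor:exchange}.
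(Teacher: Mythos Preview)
Your overall strategy---iterate Corollary~\ref{cor:exchange} over the superdiagonal cuts until $M$ becomes $M_{AGS}$---is the same as the paper's. But there is one genuine gap. You assert that ``the block $M(1\col k, k+1\col n)$ is either $0$ or equals the single-entry matrix $A_{12}$,'' i.e.\ that $M_{k,k+1}\in\{0,A_{k,k+1}\}$. This is false for a general regular splitting with substitution matrix $M$: the substitution-order hypothesis forces $M_{k,k+1}=0$ when $k{+}1$ follows $k$ in $v$, but when $k{+}1$ precedes $k$ it allows \emph{any} value $M_{k,k+1}\in[A_{k,k+1},0]$ (the constraints are only $N_{k,k+1}\ge 0$ and $M$ a Z-matrix). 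If $M_{k,k+1}$ lies strictly between $A_{k,k+1}$ and $0$, your procedure skips cut $k$ (since the block is nonzero) yet never sets that entry to $A_{k,k+1}$, so the endpoint is not $M_{AGS}$ and the chain of inequalities does not reach the target. The paper closes this gap with a one-line preliminary reduction: by Theorem~\ref{classicalcomparison} one may first pass from $(M,N)$ to the ``maximal'' splitting for the same order $v$, namely $M_{ij}=A_{ij}$ whenever $j$ does not come after $i$ in $v$ and $M_{ij}=0$ otherwise, without increasing the spectral radius. After that reduction your dichotomy holds and the rest of your plan goes through.

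On the bookkeeping you flag as the ``main obstacle'': the paper avoids your processing-order induction entirely by proving directly that after one application of Corollary~\ref{cor:exchange} at a cut $k$ the new $\hat M$ is again a substitution matrix, with the explicit order obtained by concatenating $v\cap\{k{+}1,\dots,n\}$ followed by $v\cap\{1,\dots,k\}$. This lets one iterate freely (picking any $k$ with $M_{k,k+1}=0$ each time) and terminate because each step strictly increases the number of superdiagonal entries equal to $A_{k,k+1}$. Your decreasing-$k$ sweep also works once the preliminary reduction is in place, but the paper's invariant is cleaner and sidesteps the non-interference analysis.
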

\begin{proof}
First of all, note that it is sufficient to consider splittings of the form
\begin{equation} \label{specialsplitting}
M_{ij} = \begin{cases}
0 & \text{$j$ comes after $i$ in $v$},\\
A_{ij} & \text{otherwise}.
\end{cases}
\end{equation}
for some permutation $v$. Indeed, if $M'$ is another substitution matrix with the same $v$, then $N' \geq N$ and hence $\rho((M')^{-1}N') \geq \rho(M^{-1}N)$ by Theorem~\ref{classicalcomparison}.

Take such a splitting $(M,N)$, and suppose that $M_{k,k+1} \neq A_{k,k+1}$ for some $k \in \{1,2,\dots,n\}$. Then, $M_{k,k+1}=0$, and we can apply Corollary~\ref{cor:exchange} with the first block of size $k\times k$ and show that the splitting $(\hat{M},\hat{N})$ as defined in in~\eqref{hatsplitting}
has $\rho(\hat{M}^{-1}\hat{N}) \leq \rho(M^{-1}N)$.

We claim that $\hat{M}$ is a substitution matrix, too. Indeed, consider the permutation obtained by concatenating $v_2 = v \cap \{k+1, k+2,\dots,n\}$ and $v_1 = v \cap \{1,\dots,k\}$ (with this notation we mean that the entries in the ordered lists $v_1,v_2$ come in the same order as in $v$). Then, this is a substitution order for $\hat{M}$: indeed, if $i\in v_2$ and $j\in v_1$, then $\hat{M}_{ij}=0$ by construction; while if $i,j$ belong both to $v_1$ (resp. $v_2$), with $i$ coming before $j$, then $\hat{M}_{ij}=M_{ij}=0$.

Hence we have obtained a new splitting with $\rho(\hat{M}^{-1}\hat{N}) \leq \rho(M^{-1}N)$ and a substitution matrix $\hat{M}$ that has one more superdiagonal nonzero element than $M$; we can repeat the process until we obtain $M_{AGS}$, which is the (unique) splitting of the form~\eqref{specialsplitting} with the maximal number of superdiagonal nonzero elements.
\end{proof}
\begin{remark}
  Theorem \ref{maingen}    also extends easily to nonsingular  M-matrices $A=(A_{ij})\in \mathbb R^{N\times N}$,  $A_{i,i}\in \mathbb R^{n_i\times n_i}$, $\sum_{i=1}^nn_i=N$,
  in block Hessenberg form  whenever we consider block regular splittings determined by substitution orders  $v = (i_1,i_2,\dots,i_n)$ acting on the block  entries. Adaptations of
   Theorem \ref{main} and  Theorem \ref{maingen} for upper and block upper  invertible M-matrices are  immediate.
  \end{remark}

\section{Singular Systems} \label{sec:singular}
The solution of homogeneous singular systems $A \B x=\B 0$  where $A$ is a singular M-matrix in (block) Hessenberg form  is of paramount importance  for application in Markov  chains.
Linear stationary iterative methods of the type \eqref{iterativeformula}  have been successfully used for this problem. 
The rate of convergence of these iterative methods is governed by the quantity $\gamma(P)=\max\{|\lambda| \colon \lambda \in \sigma(P), \lambda\neq 1\}$
 where  $\sigma(P)$  is the spectrum of $P$.  Under convergence conditions  this  quantity  is called   the asymptotic
 convergence factor of the iterative method  \eqref{iterativeformula} applied for the solution of homogeneous singular system  $A \B x=\B 0$.

 For the sake of clarity,  let  $A$  be  an irreducible  singular M-matrix in lower Hessenberg form. Recall that from  Theorem 4.16 in \cite{Plemmons} $A$ has  rank $n-1$ and, hence,
 $\dim\ker(A)=1$ (see also \cite{KLS} for a brief survey of properties of singular irreducible M-matrices).
 We assume, up to scaling, that $A = I - T$, where $T$ is a  column stochastic matrix (in particular, $T_{ij} \geq 0$ for all $i,j$). It holds
 $\B e^T A =\B 0^T$  and, hence,
 \[
B= L A=\begin{bmatrix}
    A[1\colon n-1, 1\colon n-1] & A[1\colon n-1, n]\\
   \B 0^T & 0
\end{bmatrix}, \quad L=\begin{bmatrix}
    I_{n-1} & \B 0\\
    \B e^T & 1
 \end{bmatrix}
 \]
 As $A$ is irreducible, it follows that $A_{n-1} := A[1\colon n-1, 1\colon n-1]$ is a nonsingular lower Hessenberg M-matrix.   This makes possible to construct
 Jacobi-like, Gauss--Seidel-like and staircase-like  regular partitionings of $A$ starting from their analogue for the matrix $B$. Specifically,
 let  us denote
 \[
 M'_{J}=L^{-1} \begin{bmatrix}
     \diag(A_{n-1}) & 0\\
     0^T & 1
 \end{bmatrix}, 
 \quad  M'_{GS}=L^{-1} \begin{bmatrix}
     \tril(A_{n-1}) & 0\\ 0^T & 1
 \end{bmatrix},
 \] 
 and
 \[
M'_{AGS}=L^{-1} \begin{bmatrix}
    \triu(A_{n-1}) & 0\\
    0^T & 1
\end{bmatrix}, 
\quad M'_{S}=L^{-1} \begin{bmatrix}
    \mathcal S(A_{n-1}) & 0\\ 
    0^T & 1
\end{bmatrix}.
\]

Theorem \ref{main}  implies the following.

\begin{theorem}\label{mains}
  Let $A$ be an irreducible  singular  lower Hessenberg M-matrix. Then,
  \[
 \gamma({M'_{J}}^{-1}N'_{J})\geq  \gamma({M'_{GS}}^{-1}N'_{GS}) \geq \gamma({M'_{S}}^{-1}N'_{S}) \geq \gamma({M'_{AGS}}^{-1}N'_{AGS}).
  \]
\end{theorem}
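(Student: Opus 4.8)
The plan is to reduce Theorem~\ref{mains} to Theorem~\ref{main} applied to the nonsingular leading principal submatrix $A_{n-1}$, exploiting the block triangular shape of the iteration matrices. Treat the four cases uniformly: each of $M'_{J}, M'_{GS}, M'_{S}, M'_{AGS}$ has the form $M' = L^{-1}\begin{bmatrix}\hat M & \B 0\\ \B 0^T & 1\end{bmatrix}$, where $\hat M$ is, respectively, $\diag(A_{n-1})$, $\tril(A_{n-1})$, $\mathcal S(A_{n-1})$, or $\triu(A_{n-1})$; put $\hat N := \hat M - A_{n-1}$. The matrix $A_{n-1}$, being a leading principal submatrix of the lower Hessenberg M-matrix $A$, is itself lower Hessenberg and an M-matrix, and it is nonsingular because $A$ is irreducible (so $A$ has rank $n-1$); hence $\hat M$ is an invertible M-matrix and, $L$ being unit lower triangular, $M'$ is invertible. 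From $A = M'-N'$ we get
\[
P' = (M')^{-1}N' = I - (M')^{-1}A = I - \begin{bmatrix}\hat M^{-1} & \B 0\\ \B 0^T & 1\end{bmatrix} L A = I - \begin{bmatrix}\hat M^{-1} & \B 0\\ \B 0^T & 1\end{bmatrix} B .
\]
Substituting the block form of $B$ and writing $\B a := A[1\colon n-1, n]$,
\[
P' = \begin{bmatrix} I_{n-1} - \hat M^{-1}A_{n-1} & -\hat M^{-1}\B a\\ \B 0^T & 1\end{bmatrix}
   = \begin{bmatrix} \hat M^{-1}\hat N & -\hat M^{-1}\B a\\ \B 0^T & 1\end{bmatrix},
\]
so $P'$ is block upper triangular with diagonal blocks the iteration matrix $\hat M^{-1}\hat N$ of the corresponding (Jacobi, Gauss--Seidel, staircase, or anti--Gauss--Seidel) regular splitting of $A_{n-1}$ and the scalar $1$.

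Next comes the eigenvalue bookkeeping. Since $A_{n-1}$ is nonsingular, $\hat M^{-1}A_{n-1}$ is nonsingular, whence $1 \notin \sigma(\hat M^{-1}\hat N)$; moreover $(\hat M,\hat N)$ is a regular splitting of the nonsingular M-matrix $A_{n-1}$, so $\rho(\hat M^{-1}\hat N) < 1$. The block triangular form then gives, counted with algebraic multiplicity, $\sigma(P') = \sigma(\hat M^{-1}\hat N)\cup\{1\}$ with the eigenvalue $1$ simple. Therefore
\[
\gamma(P') = \max\{|\lambda|\colon \lambda\in\sigma(P'),\ \lambda\neq 1\} = \rho(\hat M^{-1}\hat N),
\]
and, since the eigenvalue $1$ is simple (hence semisimple) and all other eigenvalues have modulus $<1$, each of these iterations is semiconvergent, so $\gamma(P')$ is genuinely its asymptotic convergence factor.

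To finish, combine these identities with the nonsingular theory. For the nonsingular lower Hessenberg M-matrix $A_{n-1}$, the choices $\hat M = \diag(A_{n-1}), \tril(A_{n-1}), \mathcal S(A_{n-1}), \triu(A_{n-1})$ give precisely its Jacobi, Gauss--Seidel, staircase, and anti--Gauss--Seidel splittings, so the classical inequality $\rho(P_J)\ge\rho(P_{GS})$ recalled before Theorem~\ref{main}, together with Theorem~\ref{main} itself, yields
\[
\rho(M_J^{-1}N_J) \ge \rho(M_{GS}^{-1}N_{GS}) \ge \rho(M_S^{-1}N_S) \ge \rho(M_{AGS}^{-1}N_{AGS}).
\]
Applying the identity $\gamma((M')^{-1}N') = \rho(\hat M^{-1}\hat N)$ to each of the four pairs $(M'_{J},N'_{J})$, $(M'_{GS},N'_{GS})$, $(M'_{S},N'_{S})$, $(M'_{AGS},N'_{AGS})$ turns this into the asserted chain $\gamma({M'_{J}}^{-1}N'_{J})\ge\gamma({M'_{GS}}^{-1}N'_{GS})\ge\gamma({M'_{S}}^{-1}N'_{S})\ge\gamma({M'_{AGS}}^{-1}N'_{AGS})$.

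The computations are routine linear algebra. The one delicate point is the eigenvalue count at $1$ — the assertion $1\notin\sigma(\hat M^{-1}\hat N)$ — since this is exactly what allows the $\rho$-inequalities for $A_{n-1}$ to transfer to $\gamma$-inequalities for $A$; it rests on the irreducibility of $A$ through the nonsingularity of $A_{n-1}$. Everything else is bookkeeping with the block triangular structure.
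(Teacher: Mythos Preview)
Your proof is correct and follows exactly the approach the paper intends: the paper simply states that Theorem~\ref{mains} follows from Theorem~\ref{main}, and you have carried out precisely that reduction, showing via the block upper triangular form of $(M')^{-1}N'$ that $\gamma((M')^{-1}N')=\rho(\hat M^{-1}\hat N)$ for the corresponding splitting of the nonsingular lower Hessenberg M-matrix $A_{n-1}$, and then invoking Theorem~\ref{main} together with the classical Jacobi--Gauss--Seidel inequality.
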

 
\section{A combinatorial argument for $\rho(P_{GS}) \geq \rho(P_{AGS})$}

M-matrices and non-negative matrices are intimately related to Markov chains and transition probabilities; hence the reader may wonder if the results presented here admit an alternative combinatorial proof based on comparing probabilities of certain walks on a Markov chain. We present briefly such a proof for the inequality $\rho(P_{GS}) \geq \rho(P_{AGS})$, to highlight the ideas behind the argument.

Up to scaling, we may assume $A = I - T$, where $T\in \mathbb{R}^{n\times n}$ is a \emph{substochastic matrix}, i.e., $T \geq 0$ and $T\B 1 \leq \B 1$, where $\B 1 \in \mathbb{R}^{n}$ is the vector of all ones. A \emph{walk} on the graph with vertices $\{1,2,\dots,n\}$ (from $i_0$ to $i_\ell$ of length $\ell$) is a sequence of consecutive edges (\emph{transitions}) $\omega = ((i_0,i_1), (i_1,i_2), \dots, (i_{\ell-1},i_\ell))$. To a set of walks $\Omega$ we associate the transition probability matrix
\[
\mathbb{P}[\Omega] = (\mathbb{P}[\Omega]_{ij}), \quad \mathbb{P}[\Omega]_{ij} = \sum_{\omega \in \Omega \text{ with } i_0=i, i_\ell=j} T_{i i_1} T_{i_1 i_2} \dots T_{i_{\ell-1}j}.
\]
The matrix entry $\mathbb{P}[\Omega]_{ij}$ can be interpreted as the probability of observing a walk from $i$ to $j$ belonging to $\Omega$ (conditioned on starting from $i_0=i$) in an absorbing Markov chain~\cite{GrinSnell} with transition matrix
\[
\begin{bmatrix}
    T & \B e\\
    0 & 1
\end{bmatrix} \in \mathbb{R}^{(n+1)\times (n+1)}, 
\quad \B e = (I-T)\B 1.
\]
We have added an additional absorbing state $n+1$ to account for the missing probability due to $T\B 1 \leq \B 1$ not being an equality. Note that $\mathbb{P}[\Omega]$ defined in this way is a substochastic matrix only when the set $\Omega$ is prefix-free; otherwise it may contain entries larger than 1 due to repeating prefixes: e.g., when $\Omega$ contains both $((1,2))$ and $((1,2), (2,3), (3,2))$, then $\mathbb{P}[\Omega]_{12}$ contains a sum of probabilities of events that are not disjoint.

We write $T = D + L + U$, where $L = \operatorname{tril}(T)$ is associated with \emph{downward} transitions, i.e., transitions from a level $i$ to a level $j<i$, and symmetrically $U = \operatorname{triu}(T)$ is associated with \emph{upward} transitions, i.e., transitions from a level $i$ to a level $j>i$.  The key combinatorial lemma is the following. 
\begin{lemma} \label{combolemma}
Let $A = I - T \in \mathbb{R}^{n\times n}$ (with $T$ substochastic) be lower Hessenberg, and $\omega$ be a walk with non-zero probability. If $\omega$ contains $k$ downward transitions, then it contains at least $k-n+1$ upward transitions.
\end{lemma}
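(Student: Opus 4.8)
The plan is to exploit the one-sided sparsity of a lower Hessenberg matrix. First I would observe that since $A = I - T$ is lower Hessenberg we have $A_{ij} = 0$, and hence $T_{ij} = 0$, whenever $j > i+1$. Consequently, in any walk $\omega = ((i_0,i_1),\dots,(i_{\ell-1},i_\ell))$ of non-zero probability, every transition $(i_t,i_{t+1})$ has $T_{i_t i_{t+1}} > 0$ and therefore $i_{t+1} \leq i_t + 1$. This is the crucial asymmetry: an \emph{upward} transition must raise the level by \emph{exactly} $1$, whereas a \emph{downward} transition may lower it by any amount $\geq 1$.

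With this in hand the argument is a telescoping estimate on the displacement of the walk. I would classify each of the $\ell$ steps as upward ($i_{t+1} = i_t + 1$), stationary ($i_{t+1} = i_t$), or downward ($i_{t+1} \leq i_t - 1$), and let $u$ and $k$ be the numbers of upward and downward steps respectively. Writing the net level change as $i_\ell - i_0 = \sum_{t=0}^{\ell-1}(i_{t+1}-i_t)$ and bounding each summand from above by $+1$ on an upward step, by $0$ on a stationary step, and by $-1$ on a downward step, one gets $i_\ell - i_0 \leq u - k$. Since all vertices lie in $\{1,\dots,n\}$ we also have $i_\ell - i_0 \geq 1 - n$, and combining the two inequalities yields $u \geq k - n + 1$, which is exactly the claimed bound (and vacuous, as it should be, when $k \leq n-1$).

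The hard part is really just the first step: noticing that the lower Hessenberg structure constrains upward moves to be single-level while leaving downward moves unconstrained. Once that observation is made, the remainder is elementary bookkeeping on the telescoped increments, with no delicate estimates or case analysis needed; in particular nothing about the \emph{magnitudes} of the entries of $T$ is used, only their support pattern, which matches the remark in the introduction that these comparison results are insensitive to the size of the nonzero entries.
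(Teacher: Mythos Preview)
Your argument is correct. Both your proof and the paper's use the same structural observation---that the lower Hessenberg pattern forces every upward transition to increase the level by exactly one while downward transitions may drop by several levels---but they organise the counting differently. The paper introduces, for each threshold $h\in\{1,\dots,n-1\}$, the numbers $s_h^{\uparrow}(\omega)$ and $s_h^{\downarrow}(\omega)$ of crossings of that threshold, observes that crossings alternate so that $s_h^{\downarrow}\le s_h^{\uparrow}+1$, and then sums over $h$; the Hessenberg hypothesis enters when identifying $\sum_h s_h^{\uparrow}$ with the total number of upward steps. Your telescoping of the net displacement $i_\ell-i_0=\sum_t(i_{t+1}-i_t)\le u-k$ together with the trivial bound $i_\ell-i_0\ge 1-n$ reaches the same inequality in fewer lines and without the auxiliary threshold quantities. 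The paper's approach has the mild advantage that the alternation idea generalises naturally if one ever wants to track crossings of individual levels, but for the statement at hand your route is simpler and equally rigorous.
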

\begin{proof}
Let us choose an integer $h$ such that $1\leq h < n$, and define
\begin{align*}
s_h^{\uparrow}(\omega) &= \text{no. of transitions in $\omega$ from a state in $\{1,2,\dots,h\}$ to one in $\{h+1,h+2,\dots,n\}$},\\
s_h^{\downarrow}(\omega) &= \text{no. of transitions in $\omega$ from a state in $\{h+1,h+2,\dots,n\}$ to one in $\{1,2,\dots,h\}$}.
\end{align*}
Clearly, once we reach $\{h+1,h+2,\dots,n\}$ we must leave it before entering it again, hence transitions of the two kinds alternate in $\omega$, and thus 
\[
s_h^{\uparrow}(\omega) -1 \leq s_h^{\downarrow}(\omega) \leq s_h^{\uparrow}(\omega) + 1.
\]
We take the rightmost inequality and sum over $h$, to get
\[
s^{\downarrow}(\omega) \leq \sum_{h=1}^{n-1} s_h^{\downarrow}(\omega) \leq \sum_{h=1}^{n-1} (s_h^{\uparrow}(\omega) + 1) = s^{\uparrow}(\omega) + (n-1).
\]
Here, $s^{\uparrow}(\omega)$ is the total number of upward transitions in $\omega$, which is equal to $\sum_{h=1}^{n-1} s_h^{\uparrow}(\omega)$ because in a lower Hessenberg matrix each upward transition with nonzero probability is of the form $(h,h+1)$ for some $h$. On the other hand, $s^{\downarrow}(\omega)$ is the number of downward transitions in $\omega$, which is smaller or equal than the sum $\sum_{h=1}^{n-1} s_h^{\downarrow}(\omega)$, since each downward transition is counted in $s_h^{\downarrow}(\omega)$ for at least one choice of $h$, but may be counted in multiple ones: for instance, a transition from state $4$ down to state $1$ is counted in $s^\downarrow_1(\omega), s^\downarrow_2(\omega)$, and $s^\downarrow_3(\omega)$.
\end{proof}
From the lemma we can obtain an alternative proof of the following result.
\begin{theorem}
Let $A = I - T \in \mathbb{R}^{n\times n}$ (with $T$ substochastic) be lower Hessenberg. Then, $\rho(P_{GS}) \geq \rho(P_{AGS})$.
\end{theorem}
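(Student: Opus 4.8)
The plan is to reinterpret the powers of $P_{GS}$ and $P_{AGS}$ as probability matrices of suitable families of walks, and then to dominate one family by the other using Lemma~\ref{combolemma}; it is enough to establish $\rho(P_{AGS})\le\rho(P_{GS})$. Writing $T = D+L+U$ as above, the Gauss--Seidel splitting of $A = I-T$ is $M_{GS}=I-D-L$, $N_{GS}=U$, so that
\[
P_{GS}=(I-D-L)^{-1}U=\Bigl(\sum_{k\ge 0}(D+L)^{k}\Bigr)U=\mathbb{P}[\Omega_{GS}],
\]
where $\Omega_{GS}$ is the (prefix-free) set of walks consisting of a run of self-loop and downward transitions followed by a single upward transition. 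Cutting a walk after each of its upward transitions then shows $P_{GS}^{m}=\mathbb{P}[\Omega_{GS}^{(m)}]$, where $\Omega_{GS}^{(m)}$ is the set of nonzero-probability walks that end with an upward transition and contain exactly $m$ upward transitions in total. Symmetrically, $M_{AGS}=I-D-U$, $N_{AGS}=L$, and $P_{AGS}^{m}=\mathbb{P}[\Omega_{AGS}^{(m)}]$, the walks ending with a downward transition and containing exactly $m$ downward transitions.

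Next I would compare the two families through a truncation map. Put $f(p)=\mathbf{1}^{T}P_{GS}^{p}\mathbf{1}$ and $g(m)=\mathbf{1}^{T}P_{AGS}^{m}\mathbf{1}$, which are the total probability masses carried by $\Omega_{GS}^{(p)}$ and $\Omega_{AGS}^{(m)}$. Given $\omega\in\Omega_{AGS}^{(m)}$ with $m\ge n$, Lemma~\ref{combolemma} guarantees that $\omega$ has at least $m-n+1\ge 1$ upward transitions, so we may let $\tau(\omega)$ be the prefix of $\omega$ up to and including its last upward transition: then $\tau(\omega)\in\Omega_{GS}^{(p)}$ for some $p\ge m-n+1$, and the removed suffix is a run of self-loop and downward transitions ending in a downward one, with a prescribed number of downward transitions. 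For a fixed prefix $\omega'$, the sum of the path-probabilities of all $\omega\in\Omega_{AGS}^{(m)}$ with $\tau(\omega)=\omega'$ factors as the path-probability of $\omega'$ times the total mass of the admissible suffixes; that mass is a row sum of a power of the substochastic matrix $(I-D)^{-1}L$, hence it is at most $1$. Summing over all prefixes in the image of $\tau$ yields
\[
g(m)\ \le\ \sum_{p\ge m-n+1} f(p),\qquad m\ge n .
\]

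Finally I would turn this bound into the spectral-radius inequality. If $\rho(P_{GS})=1$ there is nothing to prove, because $(M_{AGS},N_{AGS})$ is a regular splitting of the M-matrix $A=I-T$, so $P_{AGS}$ is substochastic and $\rho(P_{AGS})\le 1$. If instead $\rho(P_{GS})<1$, then for each $\varepsilon>0$ with $\rho(P_{GS})+\varepsilon<1$ Gelfand's formula gives $f(p)\le C_{\varepsilon}(\rho(P_{GS})+\varepsilon)^{p}$, and the displayed bound yields $g(m)\le C'_{\varepsilon}(\rho(P_{GS})+\varepsilon)^{m}$ for $m\ge n$. Since $\|P_{AGS}^{m}\|_{1}\le g(m)\le n\,\|P_{AGS}^{m}\|_{1}$, Gelfand's formula also gives $\rho(P_{AGS})=\lim_{m}g(m)^{1/m}$, and letting $\varepsilon\to 0$ we conclude $\rho(P_{AGS})\le\rho(P_{GS})$.

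I expect the main obstacle to be making the middle step watertight. One must verify that the families $\Omega_{GS}^{(m)}$ are genuinely prefix-free, so that $\mathbb{P}[\cdot]$ does not over-count; that the fibre of $\tau$ over a prefix $\omega'$ is exactly parametrized by the suffix walks described; and that the suffix mass is indeed bounded by $1$, which is where both the substochasticity of $(I-D)^{-1}L$ and the fact that suffixes end in a downward transition rather than a self-loop are used. One also has to handle the small-$m$ bookkeeping that walks in $\Omega_{AGS}^{(m)}$ with no upward transition can occur only when $m\le n-1$, and the degenerate case $\rho(P_{GS})=1$ separately.
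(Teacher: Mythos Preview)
Your argument is correct and follows the same overall strategy as the paper: interpret $P_{GS}^m$ and $P_{AGS}^m$ as transition-probability matrices of walk families, invoke Lemma~\ref{combolemma}, and finish with Gelfand's formula. The difference lies in how you package the intermediate inequality. You truncate each walk in $\Omega_{AGS}^{(m)}$ at its \emph{last} upward transition, bound the fibre mass via substochasticity of $(I-D)^{-1}L$, and obtain the scalar estimate $g(m)\le\sum_{p\ge m-n+1}f(p)$. The paper instead cuts each such walk at its $(m-n+1)$-th upward transition: since by the Lemma every walk counted in $P_{AGS}^m$ has at least $m-n+1$ upward transitions, this gives directly the entrywise matrix inequality
\[
P_{AGS}^{\,m}\ \le\ P_{GS}^{\,m-n+1}(I-T)^{-1},
\]
with $(I-T)^{-1}=\sum_{q\ge 0}T^{q}$ absorbing the arbitrary tail after the cut. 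This avoids both the fibre argument and the infinite sum over $p$, and produces a clean elementwise comparison that the paper singles out as being of independent interest. Your separate treatment of the case $\rho(P_{GS})=1$ is not needed under the implicit assumption that $A$ is invertible (the paper's proof uses $(I-T)^{-1}$), but it is harmless and arguably makes the statement slightly more robust.
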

\begin{proof}
By standard arguments for sojourn and hitting probabilities in Markov chains, we have
\begin{multline*}
\mathbb{P}[\text{walks of any length $\ell$ with $0$ downward transitions}] \\
= I + (D+U) + (D+U)^2 + (D+U)^3 + \dots = (I-D-U)^{-1},
\end{multline*}
and
\begin{multline*}
\mathbb{P}[\text{walks with exactly $k$ downward transitions, ending with a downward transition}] \\
= (I-D-U)^{-1}L(I-D-U)^{-1}L(I-D-U)^{-1}L\dotsm (I-D-U)^{-1}L = P_{AGS}^k,
\end{multline*}
where $P_{AGS} = (I-D-U)^{-1}L$ is the iteration matrix of the anti-Gauss--Seidel method. Similarly, when $k\geq n-1$, we have
\begin{multline*} \label{doubleprob} 
\mathbb{P}[\text{walks with \emph{at least} $k-n+1$ upwards transitions}] \\
= (I-D-L)^{-1}U(I-D-L)^{-1}U(I-D-L)^{-1}U\dotsm (I-D-L)^{-1}U(I+T+T^2+\dots)\\
 = P_{GS}^{k-n+1}(I-T)^{-1},
\end{multline*}
since a walk with at least $k-n+1$ upward transitions can be seen as a walk with exactly $k-n+1$ upward transitions, ending with one of them, followed by any walk. In view of Lemma~\ref{combolemma}, the following inequality hold component-wise
\begin{equation} \label{compineq}
    P^k_{AGS} \leq P_{GS}^{k-n+1}(I-T)^{-1}, \quad \text{for all $k\geq n-1$}, 
\end{equation}
and from~\eqref{compineq} we get
\[
\norm{P^k_{AGS}}_{\infty}^{1/k} \leq \norm{P_{GS}^{k-n+1}}_{\infty} ^{1/k} \,\, \norm{(I-T)^{-1}}_\infty ^{1/k}.
\]
Passing to the limit and using Gelfand's formula $\lim_{k\to\infty}\norm{M^k}^{1/k} = \rho(M)$, we obtain $\rho(P_{AGS}) \leq \rho(P_{GS})$.
\end{proof}

\begin{remark}
Equation~\eqref{compineq} shows that this comparison theorem follows from an elementwise inequality, although a more complicated one than the ones considered in~\cite{CsoV,Woz}.
\end{remark}

\section{Numerical experiments}

To verify the statements of the theorems and give a quantitative assessment of the difference between $\rho(P_{GS})$, $\rho(P_{S})$, $\rho(P_{AGS})$, we plot them for various examples.

To obtain a random lower Hessenberg M-matrix $A\in\mathbb{R}^{n\times n}$, we generate random non-negative $P\in\mathbb{R}^{n\times n}$, $\B u,\B v\in\mathbb{R}^n$ by drawing their entries uniformly and independently from $[0,1]$, and then we find the unique lower Hessenberg matrix $A$ such that 
\[
A\B u = \B v \quad \text{ and }\quad  A_{ij}=-P_{ij} \quad \text{for all $i\neq j$, $i \geq j-1$.}
\]
In Matlab code:
\begin{lstlisting}
P = tril(rand(n), 1);
P = P - diag(diag(P));
u = rand(n, 1);
v = rand(n, 1);
d = (v + P*u) ./ u;
A = diag(d) - P;
\end{lstlisting}
The condition $A\B u = \B v$ ensures that $A$ is a nonsingular M-matrix, since any Z-matrix for which $A\B u = \B v$ for some $\B u>0, \B v\geq 0, \B v\neq 0$ is an M-matrix~\cite[Chapter~6, Condition I\textsubscript{28}]{Plemmons}.

We show in Figure~\ref{fig:exp_compare} the values of the three iteration radii for 50 random $5\times 5$ lower Hessenberg matrices. One can confirm that the three values are always in the order predicted by Theorem~\ref{main}; moreover, the experiments reveal that while the difference between $\rho(P_{GS})$ and $\rho(P_{S})$ if often minimal, the difference with $\rho(P_{AGS})$ is much more substantial.
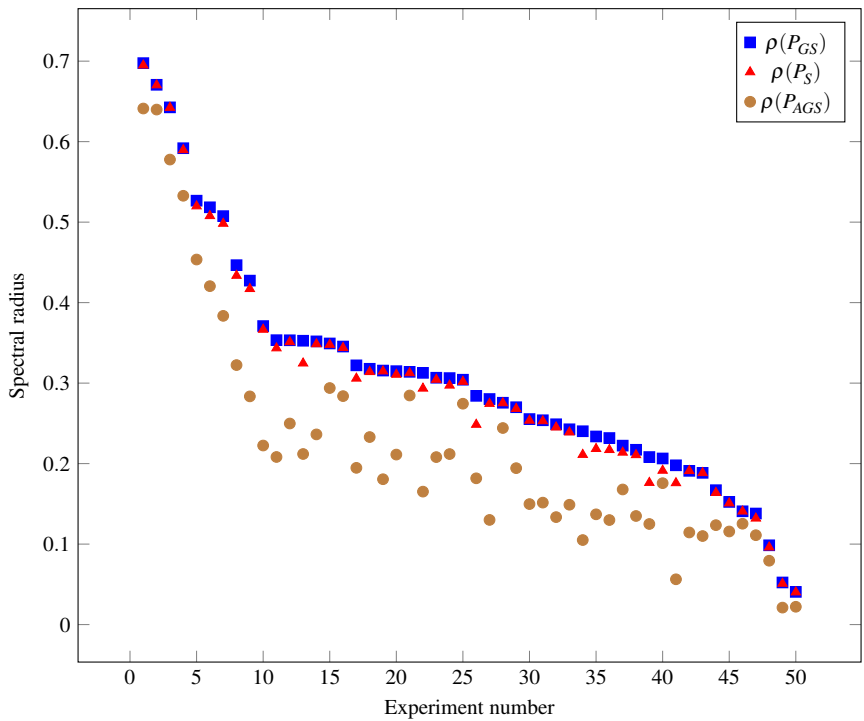
\begin{figure}[htp]
\begin{tikzpicture}
\begin{axis}[width=\textwidth, xlabel={Experiment number}, ylabel={Spectral radius}]
\pgfplotstableread{exp1.csv}{\mytable};
\addplot[only marks, mark=square*, blue] table[x=expnumber, y=rhoGS] {\mytable};
\addplot[only marks, mark=triangle*, red] table[x=expnumber, y=rhoS] {\mytable};
\addplot[only marks, mark=*, brown] table[x=expnumber, y=rhoAGS] {\mytable};
\legend{$\rho(P_{GS})$, $\rho(P_{S})$, $\rho(P_{AGS})$}
\end{axis}
\end{tikzpicture}
\caption{Comparison of iteration radii for 50 random $5\times 5$ lower Hessenberg M-matrices, sorted by decreasing value of $\rho(GS)$.} \label{fig:exp_compare}
\end{figure}

% In Figure~\ref{fig:exp_dimension}, we investigate what happens as the size of the matrices change. There seems to be no clear dependence on the dimension (probably due also to our choice of random matrices for the experiments).
% \begin{figure}[htp]
% \begin{tikzpicture}
% \begin{axis}[width=\textwidth, xlabel={Matrix size $n$}, ylabel={Spectral radius}]
% \pgfplotstableread{expdimension.csv}{\mytable};
% \addplot+[only marks, mark=square*] table[x=dimension, y=rhoGS] {\mytable};
% \addplot+[only marks, mark=triangle*] table[x=dimension, y=rhoS] {\mytable};
% \addplot+[only marks] table[x=dimension, y=rhoAGS] {\mytable};
% \legend{$\rho(P_{GS})$, $\rho(P_{S})$, $\rho(P_{AGS})$}
% \end{axis}
% \end{tikzpicture}
% \caption{Comparison of iteration radii for random lower Hessenberg M-matrices of varying size.} \label{fig:exp_dimension}
% \end{figure}
% %%%% TODO: remove this newpage -- it is here just because otherwise Latex chokes on these plots; but probably things will change as we write more text.
% \newpage
%

In Figure~\ref{fig:exp_excess}, we investigate what happens as the matrices $A$ get close to singular. For this experiment, rather than taking random $\B u,\B v$, we set $\B u = \B 1$ and $\B v = \eta \B 1$ for a certain scalar $\eta$. The first plot displays the three spectral radii; one sees that as $\eta$ gets smaller they get closer to $1$ (i.e., the iterative methods get slower) and the difference between them is harder to detect. For this reason, in a second plot we display the value of $k$ needed to obtain $\rho(P)^k \leq 0.01$ (that is, $k = \frac{\log 0.01 }{\log \rho(P)}$). This quantity gives an indication of the number of iterations required for the convergence of an iterative method, and is a more practical metric for this case. One can see that the benefits of the anti-Gauss--Seidel splitting in terms of convergence speed are present even in cases where the matrix is closer to singular.
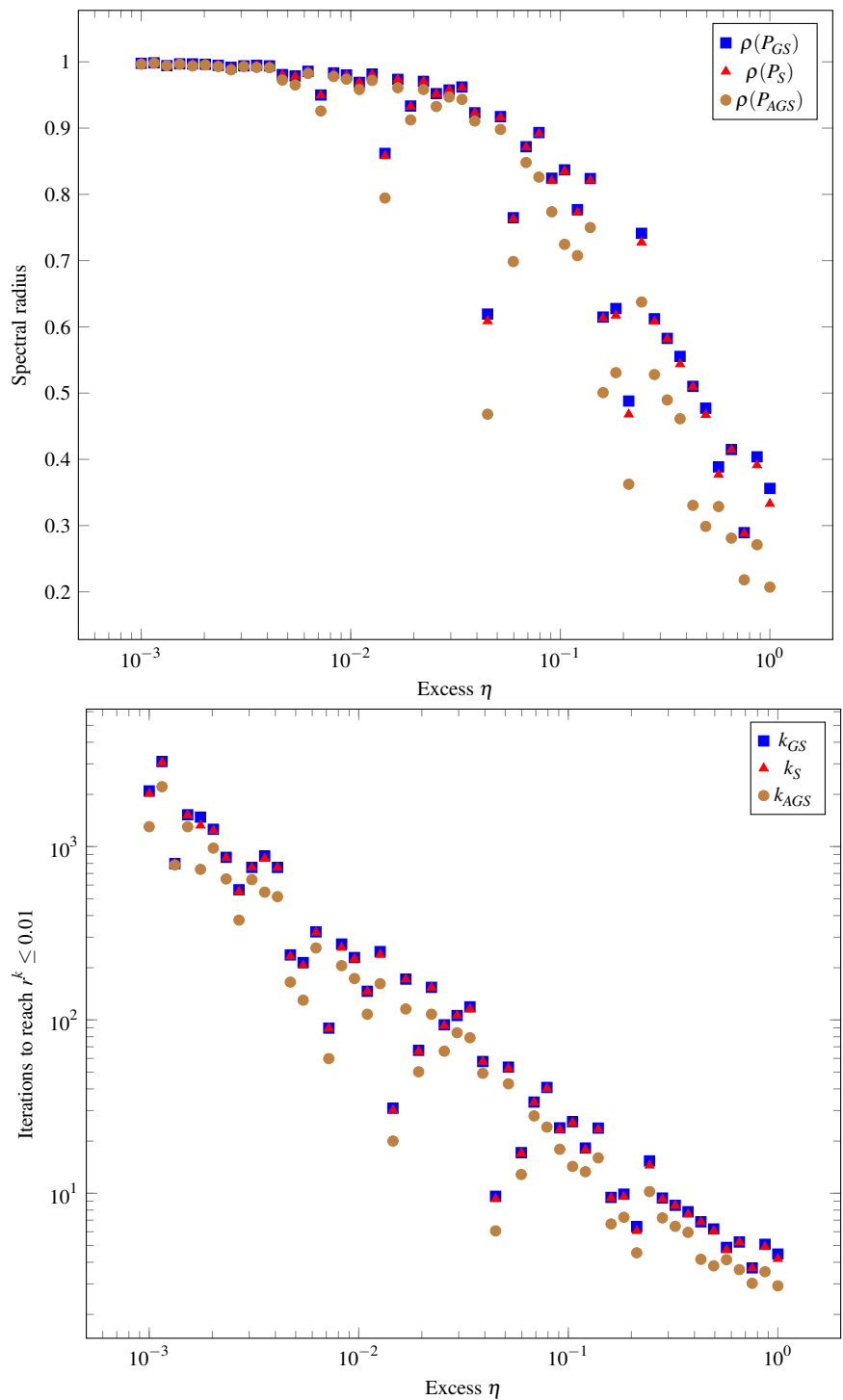
\begin{figure}[htp]
\begin{tikzpicture}
\begin{axis}[width=\textwidth, xlabel={Excess $\eta$}, ylabel={Spectral radius}, xmode=log]
\pgfplotstableread{expexcess.csv}{\mytable};
\addplot[only marks, mark=square*, blue] table[x=excess, y=rhoGS] {\mytable};
\addplot[only marks, mark=triangle*, red] table[x=excess, y=rhoS] {\mytable};
\addplot[only marks, mark=*, brown] table[x=excess, y=rhoAGS] {\mytable};
\legend{$\rho(P_{GS})$, $\rho(P_{S})$, $\rho(P_{AGS})$}
\end{axis}
\end{tikzpicture}

\begin{tikzpicture}
\begin{axis}[width=\textwidth, xlabel={Excess $\eta$}, ylabel={Iterations to reach $r^k \leq 0.01$}, ymode=log, xmode=log]
\pgfplotstableread{expexcess.csv}{\mytable};
\addplot[only marks, mark=square*, blue] table[x=excess, y=logGS] {\mytable};
\addplot[only marks, mark=triangle*, red] table[x=excess, y=logS] {\mytable};
\addplot[only marks, mark=*, brown] table[x=excess, y=logAGS] {\mytable};
\legend{$k_{GS}$, $k_{S}$, $k_{AGS}$}
\end{axis}
\end{tikzpicture}

\caption{Comparison of (a) iteration radii (b) power $k$ needed to reach $\rho(P)^k \leq 0.01$, for 50 random $5\times 5$ lower Hessenberg M-matrices with varying excess $\eta$.} \label{fig:exp_excess}
\end{figure}

For a more challenging example from applications, we take the generator matrix $Q$ from the queuing model described in~\cite{dudin}. This is a complex queuing model, a BMAP/PHF/1/N model with retrial system with finite buffer and non-persistent customers. We do not describe in detail the construction of this matrix, as it would take some space, but refer the reader to~\cite[Sections~4.3 and~4.5]{dudin}. The only change with respect to the paper is that we fix the orbit size to a finite capacity $K$ (when the orbit is full, customers leave the queue forever). We set $N=5$, which results in a block upper Hessenberg matrix $Q$ with $K\times K$ blocks of size $48$. This matrix $Q$ is singular $M$-matrix, as it is the generator of an irreducible continuous-time Markov chain; hence we can apply the strategy in Section~\ref{sec:singular} to $A=-Q$ to solve the system $\pi Q = 0$ and determine the invariant distribution of the queue. In addition, the matrix is quite sparse: for $K=50$ it has 1.1\% of nonzero elements. This sparsity makes the use of iterative methods appealing, especially if one has in mind simulations with large values of $N$ and $K$. In this paper, we restrict to small and medium-sized matrices, so that we can comfortably compute their spectral radii with the Matlab function \texttt{eig}.

In Figure~\ref{fig:exp_dudin}, we plot once again the number of iterations required to reach $r^k \leq 0.01$, for various values of $K$, using the block variants of the iterative methods described above.
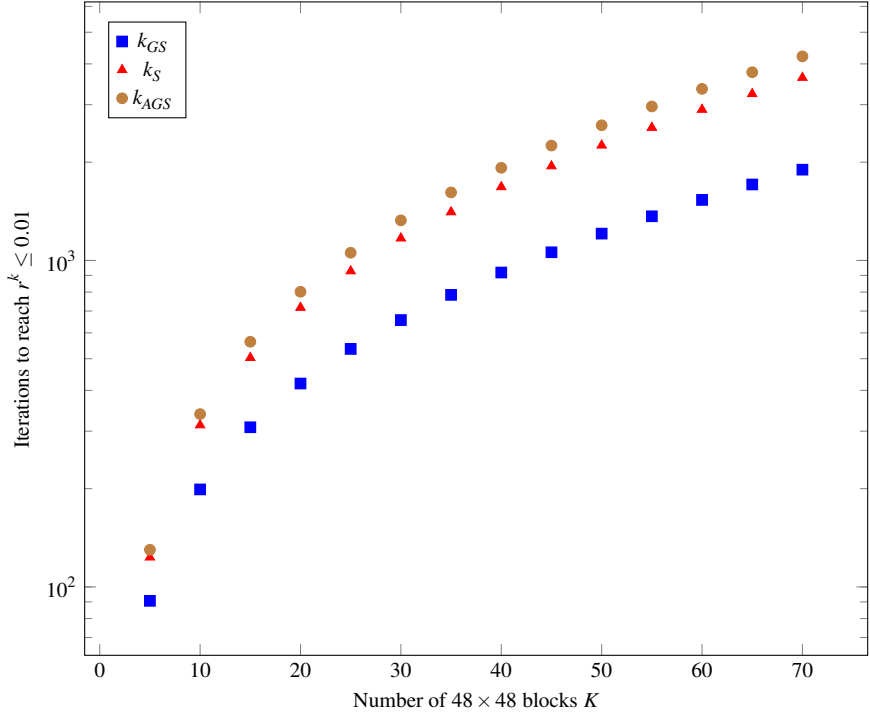
\begin{figure}[htp]
\begin{tikzpicture}
\begin{axis}[width=\textwidth, xlabel={Number of $48\times 48$ blocks $K$}, ylabel={Iterations to reach $r^k \leq 0.01$}, ymode=log, legend pos = north west]
\pgfplotstableread{experiment_dudin.csv}{\mytable};
\addplot[only marks, mark=square*, blue] table[x=K, y=kGS] {\mytable};
\addplot[only marks, mark=triangle*, red] table[x=K, y=kS] {\mytable};
\addplot[only marks, mark=*, brown] table[x=K, y=kAGS] {\mytable};
\legend{$k_{GS}$, $k_{S}$, $k_{AGS}$}
\end{axis}
\end{tikzpicture}

\caption{Comparison of power $k$ needed to reach $\rho(P)^k \leq 0.01$, for various values of $K$ in the matrix $Q$ from~\cite{dudin}.} \label{fig:exp_dudin}
\end{figure}
Note that this time the matrix is upper Hessenberg, not lower, hence the results are reversed and the best method is Gauss-Seidel, with a block lower bidiagonal $M_{GS}$, as predicted by the theory.

We conclude the section with a numerical exploration of overrelaxation in the context of these iterative methods. As is well known, the main difficulty in using SOR is the choice of the parameter $\omega$. Our theoretical results do not cover the case $\omega > 1$, since it does not lead to regular splittings; however, it is still interesting to investigate the behavior of the iteration radius in this regime.

We recall briefly that GS and AGS with overrelaxation are given by
\[
    x^{(k+1)}_i = (1-\omega)x^{(k)}_i + \omega A_{ii}^{-1}\left( b - \sum_{j< i} A_{ij}x_j^{(k+1)} - \sum_{j > i} A_{ij}x_j^{(k)}\right), \quad i=1,2,\dots,n
\]
and
\[
    x^{(k+1)}_i = (1-\omega)x^{(k)}_i + \omega A_{ii}^{-1}\left( b - \sum_{j> i} A_{ij}x_j^{(k+1)} - \sum_{j < i} A_{ij}x_j^{(k)}\right), \quad i=1,2,\dots,n
\]
One can verify that these formulas correspond to taking $M_{GSOR,\omega} = \frac{1}{\omega} D + L$, $M_{AGSOR,\omega}= \frac{1}{\omega} D + U$, where $D,L,U$ stand for the (block) diagonal, strictly (block) lower triangular, and strictly (block) upper triangular part of the matrix $A$.

As for the staircase splitting, a natural choice to perform overrelaxation is taking
\[
    x^{(k+1)}_i = (1-\omega)x^{(k)}_i + \omega A_{ii}^{-1}\left( b - \sum_{j\in J_i} A_{ij}x_j^{(k+1)} - \sum_{j \not \in J_i, j\neq i} A_{ij}x_j^{(k)}\right), \quad i=1,2,\dots,n,
\]
where
\[
J_i = \begin{cases}
    \{i-1,i+1\} \cap \{1,2,\dots,n\} & \text{$i$ odd}, \\
    \emptyset & \text{$i$ even},
\end{cases}
\]
i.e., for each index $i$ one takes a weighted combination of the old entry $x_i^{(k)}$ and what would be the new entry computed with the staircase splitting. This yields a splitting with $M_{STSOR,\omega} = M_S + \frac{1-\omega}{\omega} D$, a matrix which coincides with $M_S$ on the off-diagonal entries (or blocks) but has the diagonal entries (blocks) scaled by a factor $\frac{1}{\omega}$.

This is not the only possible choice to define an over-relaxed version of the staircase splitting; a possible alternative is moving the weighted combination inside the parentheses and writing
\[
    x^{(k+1)}_i = A_{ii}^{-1}\left( b - \sum_{j\in J_i} A_{ij}\left((1-\omega)x_j^{k} + \omega x_j^{(k+1)}\right) - \sum_{j \not \in J_i, j\neq i} A_{ij}x_j^{(k)}\right), \quad i=1,2,\dots,n
\]
instead. This yields a splitting with $M_{STSOR2,\omega} = \omega M_S + (1-\omega) D$, i.e., a matrix which coincides with $M_S$ in the diagonal entries (blocks) but has the off-diagonal entries (blocks) scaled by a factor $\omega$.

In Figure~\ref{fig:exp_SOR} we investigate the effect of overrelaxation on the matrix $Q$ from the previous example, setting $K=20$ as the number of blocks.
\begin{figure}[htp] 
\begin{tikzpicture}
\begin{axis}[width=\textwidth, xlabel={Overrelaxation parameter $\omega$}, ylabel={Spectral radius}, ymax=1.07, legend pos = north west]
\pgfplotstableread{experiment_SOR.csv}{\mytable};
\addplot[only marks, mark=square*, blue] table[x=omega, y=rhoGSOR] {\mytable};
\addplot[only marks, mark=triangle*, red] table[x=omega, y=rhoSTSOR] {\mytable};
\addplot[only marks, mark=diamond*, purple] table[x=omega, y=rhoSTSOR2] {\mytable};
\addplot[only marks, mark=*, brown] table[x=omega, y=rhoAGSOR] {\mytable};
\legend{$\rho(P_{GSOR,\omega})$, $\rho(P_{STSOR,\omega})$, $\rho(P_{STSOR2,\omega})$, $\rho(P_{AGSOR,\omega})$}
\end{axis}
\end{tikzpicture}

\caption{Comparison of iteration radii for various values of the overrelaxation parameter $\omega$ in the matrix $Q$ from~\cite{dudin} (with $K=20$).} \label{fig:exp_SOR}
\end{figure}
In this example, the iteration radius decreases until a certain threshold is reached, but after that it increases sharply and quickly surpasses $1$ (and then the iterative method stops converging). This threshold strongly depends on the method used, with $STSOR2$ being the one that is less sensitive to the value of $\omega$. One can see that even for $\omega>1$ the inequality between $\rho(P_{GSOR,\omega})$, $\rho(P_{STSOR,\omega})$, $\rho(P_{AGSOR,\omega})$ (but not $\rho(P_{STSOR2,\omega})$) is preserved in this example.

It is  worth pointing out that for the optimal choice of the relaxation parameter  $\omega=1.33$ in GSOR we find  $\rho(P_{GSOR,1.33})\simeq( \rho(P_{S}))^{6.335}$. 
    Since one iteration of the block staircase  method requires
    only two parallel steps (as opposed to $n-1$ for GSOR) it follows that even in absence of overrelaxation the staircase splitting method is still more efficient in a parallel computing environment.
    Comparison of the methods for the best choice of the corresponding relaxation parameters  also yields considerably better results.  For instance, we find
   $\rho(P_{GSOR,1.33})\simeq( \rho(P_{STSOR2,1.89}))^{2.31}$.

In our last experiment, we consider an example with a block tridiagonal matrix. We take 
    $
    Q=-(A\otimes I_n + I_n \otimes A + \lambda_1\diag(0_{n-1}, 1)\otimes R)$,
    where $A\in \mathbb R^{n\times n}$ is  tridiagonal,
    \[
    A=\left[\begin{array}{cccccccc}
        \lambda & -\mu\\
        -\lambda & \lambda+\mu & -2\mu \\& -\lambda & \lambda+2\mu & -3\mu \\ & & \ddots & \ddots & \ddots \\
        & & & -\lambda & \lambda+s\mu & -s\mu \\ & & & &\ddots & \ddots & \ddots \\
        & & & & &  -\lambda & \lambda+s\mu & -s\mu \\ & & & & & & -\lambda & s\mu
      \end{array}\right]
    \]
    and $R\in \mathbb R^{n\times n}$  is lower bidiagonal with  diagonal and subdiagonal entries equal to $1$ and $-1$, respectively, and $R(n,n)=0$. We set $n=21$, $s=5$, $\lambda=0.9$, $\mu=0.1$ and $\lambda_1=1$.
    The matrix $Q$ is a singular block tridiagonal $M$-matrix, and is the generator of nearly-separable continuous-time Markov chain associated to a 2-queue overflow network~\cite{MHKS}. Iterative methods are effective for the solution of nearly-separable  Markov chains  due to the knowledge  of a good starting point \cite{OLDP}.
      From the results in Section~\ref{sect_compare}  and Section~\ref{sec:singular} it follows that GS, AGS, and staircase splittings have the same iteration radius in absence of overrelaxation. In Figure~\ref{fig:exp_SOR_compare} one sees that this property is preserved for all values of $\omega$ for GSOR, STSOR, AGSOR (at least in this example), but not for STSOR2.
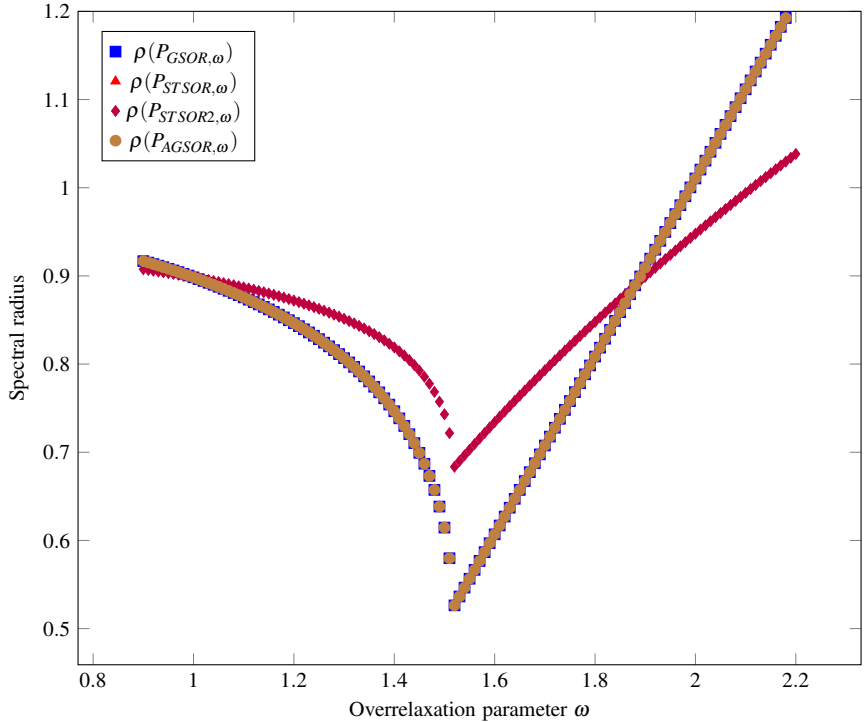
\begin{figure}[htp]
\begin{tikzpicture}
\begin{axis}[width=\textwidth, xlabel={Overrelaxation parameter $\omega$}, ylabel={Spectral radius}, ymax=1.2, legend pos = north west]
\pgfplotstableread{experiment_SOR_compare.csv}{\mytable};
\addplot[only marks, mark=square*, blue] table[x=omega, y=rhoGSOR] {\mytable};
\addplot[only marks, mark=triangle*, red] table[x=omega, y=rhoSTSOR] {\mytable};
\addplot[only marks, mark=diamond*, purple] table[x=omega, y=rhoSTSOR2] {\mytable};
\addplot[only marks, mark=*, brown] table[x=omega, y=rhoAGSOR] {\mytable};
\legend{$\rho(P_{GSOR,\omega})$, $\rho(P_{STSOR,\omega})$, $\rho(P_{STSOR2,\omega})$, $\rho(P_{AGSOR,\omega})$}
\end{axis}
\end{tikzpicture}

\caption{Comparison of iteration radii for various values of the overrelaxation parameter $\omega$ in the matrix $Q$ from  \cite{MHKS}. } \label{fig:exp_SOR_compare}
\end{figure}
Again,  by comparison of  the optimal choices for the relaxation parameter  we find  $\rho(P_{GSOR,1.52})\simeq( \rho(P_{STSOR2,1.52}))^{1.686}$.
Note that the block staircase partitioning is not block triangular and therefore the well known necessary condition $\omega \in (0,2)$ for the convergence of SOR (due to Kahan~\cite{kahan}) does not apply.

\section{Conclusions}

In this paper we have shown that there is a hierarchy among matrix splittings for M-matrices in Hessenberg form, covering the Gauss--Seidel, anti-Gauss--Seidel, and staircase partitionings, together with some generalizations. These results encourage further investigation into comparison theorems for M-matrix splittings, suggesting that this classical topic is far from being completely analyzed and solved. Future work is concerned with the analysis  of these generalizations for the design of efficient processor-oriented variants of staircase  splitting methods for parallel computation. Another interesting topic  is the comparison of  classical  stationary iterative methods and staircase splitting methods for semidefinite matrices, under suitable structures.

Finally, the analysis of relaxation techniques  applied to  (block) staircase  splittings  is  an ongoing research.

\bibliographystyle{spmpsci} 
\bibliography{msplit}
 
\end{document}